\documentclass[11pt,twoside]{amsart}

\usepackage{amscd,amsfonts,amsmath,amssymb,amsthm,anyfontsize,bbding,bigints,blindtext,centernot,cite,colortbl,enumitem,extarrows,fancyhdr,float,footmisc,galois,geometry,graphicx,hyperref,imakeidx,lastpage,letltxmacro,lipsum,lmodern,marginnote,mathrsfs,mathtools,pgfplots,spectralsequences,stmaryrd,subcaption,svg,thmtools,thm-restate,tikz,tikz-cd,titlecaps,xcolor}
\usepackage[utf8]{inputenc}
\usepackage[T1]{fontenc}

\hypersetup{
    colorlinks,
    linkcolor={red!50!black},
    citecolor={blue!70!black},
    urlcolor={green!80!black}}

\LetLtxMacro{\oldmarginnote}{\marginnote}
\renewcommand{\marginnote}[1]{\oldmarginnote{\color{orange}#1}}

\makeatletter
\def\blfootnote{\xdef\@thefnmark{}\@footnotetext}
\makeatother

\usetikzlibrary{
    positioning,
    decorations.markings,
    decorations.pathreplacing,
} 
\usetikzlibrary{patterns}
\pgfplotsset{compat=1.16}
\usetikzlibrary{calc}

\newtheorem{theorem}{Theorem}[section]
\newtheorem{prop}[theorem]{Proposition}
\newtheorem{definition}[theorem]{Definition}
\newtheorem{corollary}[theorem]{Corollary}
\newtheorem{lemma}[theorem]{Lemma}

\newtheorem{example}[theorem]{Example}

\newtheorem{construction}[theorem]{Construction}
\newtheorem{claim}[theorem]{Claim}

\newtheorem{remark}[theorem]{Remark}

\counterwithin{figure}{subsection}

\newcommand{\R}{\mathbb{R}}
\newcommand{\C}{\mathbb C}
\newcommand{\Z}{\mathbb{Z}}

\newcommand{\T}{\mathbb{T}}

\newcommand{\Q}{\mathbb{Q}}

\newcommand{\hyp}{\mathbb{H}}
\newcommand{\sph}{\mathbb{S}}
\newcommand{\QA}{\mathbb{A}}

\newcommand{\id}{\text{id}}

\newcommand{\Id}{\text{Id}}

\newcommand{\calO}{\mathcal O}
\newcommand{\Padic}{\mathcal P}

\makeatletter
\newcommand*\rel@kern[1]{\kern#1\dimexpr\macc@kerna}
\newcommand*\widebar[1]{%
  \begingroup
  \def\mathaccent##1##2{%
    \rel@kern{0.8}%
    \overline{\rel@kern{-0.8}\macc@nucleus\rel@kern{0.2}}%
    \rel@kern{-0.2}%
  }%
  \macc@depth\@ne
  \let\math@bgroup\@empty \let\math@egroup\macc@set@skewchar
  \mathsurround\z@ \frozen@everymath{\mathgroup\macc@group\relax}%
  \macc@set@skewchar\relax
  \let\mathaccentV\macc@nested@a
  \macc@nested@a\relax111{#1}%
  \endgroup
}
\makeatother

\newcommand{\inv}{{-1}}
\newcommand{\Ram}{\text{Ram}}

\title{Tomter's example revisited}
\author{Danyu Zhang}
\address{Department of Mathematics, University of Luxembourg, Maison du Nombre, 6, avenue de la Fonte, L-4364 Esch-sur-Alzette, Luxembourg}
\email{danyu.zhang@uni.lu}

\begin{document}

\begin{abstract}
    We construct examples of fibrewise Anosov flows over the Fenley--Mann--Potrie example of dimension 4, which is a family of non-algebraic Anosov flows with unstable dimension $1+4n$ and stable dimension $2+4n$, for every positive integer $n$.
\end{abstract}

\maketitle
\thispagestyle{empty}

\bigskip

\section{Introduction}

A $C^1$ flow $\varphi^t:M\to M$ on a closed manifold $M$ is Anosov if there exists an invariant splitting $TM=E^s\oplus \R X\oplus E^u$, a Riemannian metric $\|\cdot\|$, and constants $C,\lambda>0$ such that for any $t\geq 0$, we have 
$$\|D\varphi^tv \|\leq Ce^{-\lambda t}\|v\|,\ \text{if } v\in E^s,\ \text{and} $$
$$\|D\varphi^{-t}v \|\leq Ce^{-\lambda t}\|v\|,\ \text{if } v\in E^u.$$

Anosov flows have been extensively studied since the 1960s, and their classification has been one of the central questions in the theory. The first examples are suspension flows of Anosov diffeomorphisms and geodesic flows of negatively curved manifolds. Tomter \cite{tomter} classified Anosov flows on closed homogeneous manifolds that are induced by actions of one-parameter subgroups. Such flows are either suspensions of Anosov diffeomorphisms of nilmanifolds, geodesic flows of hyperbolic manifolds, or flows on nilmanifold bundles fibering over geodesic flows of hyperbolic manifolds (nil-suspensions); see Theorem \ref{theorem: classification of algebraic flows}.

We are primarily interested in Anosov flows up to orbit equivalence. We call an Anosov flow \emph{algebraic} if it is orbit equivalent to one of the flows in Tomter's classification.

In dimension three, in addition to suspensions and geodesic flows, there are many non-algebraic examples of Anosov flows \cite{franks-williams, handel-thurston, bonatti-langevin, BBY, neige}. In particular, rich families of Anosov flows have been constructed on hyperbolic 3-manifolds \cite{goodman, bowden-mann, bonatti-iakovoglou, beguin-yu, BBMY}. In higher dimensions, however, non-algebraic examples remain scarce. To date, only two families of such flows are known, constructed in \cite{BBGRH} and \cite{fenley-mann-potrie}.

The presence of lower-dimensional Anosov flows naturally suggests considering bundle constructions. In \cite{BBGRH}, the authors introduced a family of \emph{fibrewise Anosov flows}: flows on torus bundles that are hyperbolic along the fibres and fibre over lower-dimensional Anosov flows (see Definition \ref{def: fibrewise Anosov}). However, no concrete examples were given.

Inspired by Tomter's algebraic examples of the third ``mixed type'', we aim to construct fibrewise Anosov flows as follows.

\begin{construction}\label{construction}
    Suppose we have an Anosov flow $\varphi^t: M\to M$.
    Let $\pi: \widetilde M\to M$ be a normal covering with the deck group $\Gamma:=\pi_1M/\pi_1\widetilde M$. 
    Suppose $\varphi^t$ lifts to $\tilde\varphi^t:\widetilde M\to \widetilde M$.
    We want to find a representation $\rho:\Gamma\to GL(n,\Z)$. Let $\Gamma$ act on $\widetilde M\times \T^n$ by $(x,y).\gamma=(x.\gamma, \rho(\gamma^\inv)y)$, where $(x,y)\in\widetilde M\times\T^n$ and $\gamma\in\Gamma$. 
    Define $\tilde\Phi^t(x,y)=(\tilde\varphi^t(x),y)$. The flow $\tilde\Phi^t$ projects to $\Phi^t:N\to N$ where $N:=(\widetilde M\times\T^n)/\Gamma$, which is our natural candidate for a fibrewise Anosov flow.
\end{construction}

Fenley--Mann--Potrie \cite{fenley-mann-potrie} constructed a family of codimension-one Anosov flows on $4$-manifolds that are $\sph^1$-bundles over hyperbolic $3$-manifolds. In \cite{tomter}, Tomter described actions of arithmetic Fuchsian groups on tori and used them to construct algebraic fibrewise Anosov flows over unit tangent bundles of surfaces.

This naturally led us to consider arithmetic Kleinian groups, which also admit actions on tori and provide the representations $\rho$ required in the construction above. We thus obtain the following theorem.

\begin{theorem} \label{theorem: main theorem}
    Let $M$ be a closed hyperbolic 3-manifold and $\widetilde M$ be its universal cover. Assume \begin{enumerate}
        \item the fundamental group of $M$ is an arithmetic Kleinian group derived from a quaternion division algebra over a number field with exactly one complex place and no real places;
        \item there exists an Anosov flow on the flat bundle $M':=(\widetilde M\times F)/\Gamma$ where $F$ is a closed manifold and $\Gamma$ acts diagonally, and the flow lifts to $\widetilde M\times F$;
        \item the Anosov flow on $M'$ is uniformly quasi-geodesic with respect to the metric on $\widetilde M$;
        \item the Anosov flow on $M'$ is either transitive or every orbit projects to a geodesic on $M$.
    \end{enumerate}
    Then there exists a fibrewise Anosov flow over $M'$.
\end{theorem}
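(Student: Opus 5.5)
We follow Construction \ref{construction} with $M'$ in the role of the base. Take the covering $\widetilde M\times F\to M'$ with deck group $\Gamma=\pi_1M$ acting diagonally. By assumption (2) the Anosov flow $\varphi^t$ on $M'$ lifts to $\tilde\varphi^t$ on $\widetilde M\times F$. The first step is to produce the representation $\rho:\Gamma\to GL(n,\Z)$ from assumption (1). Let $A$ be the quaternion division algebra over the number field $k$, which has one complex place and no real places, and let $\mathcal O$ be an order with $\Gamma\subset P\rho_0(\mathcal O^1)$. Then $A\otimes_\Q\R\cong M_2(\C)$, since there is only one complex place, and $\mathcal O$ is a lattice of rank $4[k:\Q]=8$. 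The group $\mathcal O^1$ acts on $\mathcal O$ by left multiplication, giving $\Gamma$ (after passing to a lift to $SL(2,\C)$, or replacing the action with $\pm1$ twisted appropriately) an integral representation into $GL(8,\Z)$. Taking direct sums of copies, or $\mathcal O^{n}$, we get representations on $\T^{8n}$ if needed. Over $\R$ this representation is the standard representation $\C^2$ of $SL(2,\C)$, viewed as $\R^4$, doubled since $\mathcal O\otimes\R\cong M_2(\C)\cong \C^2\oplus\C^2$. We then form $N=(\widetilde M\times F\times\T^{8})/\Gamma$ with flow $\Phi^t$ induced from $\tilde\varphi^t\times\mathrm{id}$.

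The second step is to show that $\Phi^t$ is hyperbolic along the fibres. Trivialize the flat bundle $E=(\widetilde M\times F\times\R^8)/\Gamma$ using a $\Gamma$-equivariant family of inner products. Write $\widetilde M=\hyp^3$, let $SL(2,\C)$ act on $\C^2$, and use the equivariant metric on $\C^2$ indexed by points of $\hyp^3=SL(2,\C)/SU(2)$. Along a flow line $\tilde\varphi^t(p)$ whose $\widetilde M$-projection is a geodesic from $\xi^-$ to $\xi^+\in\partial\hyp^3$, the standard computation (the geodesic flow is $\mathrm{diag}(e^{t/2},e^{-t/2})$) shows that $\C^2$ splits into the lines $\xi^+,\xi^-$. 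One of them is expanded at rate $e^{t/2}$ and the other is contracted at rate $e^{-t/2}$. This gives the fibre splitting of dimensions $4+4$, which is consistent with the abstract's $1+4n$ and $2+4n$ dimensions on top of the base's $1$ and $2$.

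For general orbits we use assumption (3). Uniformly quasi-geodesic orbits lie within bounded distance of a geodesic, by the Morse lemma, with well-defined endpoints $\xi^\pm$ depending continuously and $\Gamma$-equivariantly on the orbit. We define the candidate splitting of $\R^8$ over the point by the lines $\xi^\pm$. This splitting is invariant because it depends only on the orbit, and since the endpoint maps are continuous and equivariant it descends to $N$. The growth estimate follows by comparing the equivariant metric at $\tilde\varphi^t(p)$ with the metric at the nearby geodesic point. The metrics at points at bounded distance are uniformly comparable, and quasi-geodesic progress gives distance $\ge at-b$, hence exponential contraction and expansion with uniform constants.

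The main obstacle is this uniformity, together with the domination needed for $\Phi^t$ to be genuinely Anosov in the sense of Definition \ref{def: fibrewise Anosov}. We must check either that the fibre rates dominate or are compatible with the base rates, or that the total splitting is Anosov after adjusting the flow. Assumption (4) is used here. If every orbit projects to a geodesic, the computation above is exact. In the transitive case we would first try to argue that the fibre Lyapunov exponents along all periodic orbits are bounded away from zero. The reason is that periodic orbits correspond to loxodromic elements whose translation length is comparable to the period by (3). Transitivity plus density of periodic orbits would then upgrade this to a uniform continuous cone-field estimate. If domination fails, we would reparametrize the fibre action by tensoring $\rho$ with a power, or use a time change, so that the fibre rate $e^{ct}$ is separated from the base rates.
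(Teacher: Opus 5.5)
Your core argument (second and third paragraphs) is correct, and it follows a genuinely different route from the paper.

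\textbf{The paper's route.} It uses a discontinuous equivariant fibre metric: the Euclidean metric over a fundamental domain $D_0$, transported by $\rho(\gamma)$. It proves fibre hyperbolicity only over periodic orbits, or over orbits lying above closed geodesics. There the return deck transformation is identified with the class of the shadowing closed geodesic, and the contraction $|\lambda|^{\ell(\gamma)}$ is read off from the eigenvalues in Proposition~\ref{prop: translation length}. The translation length $\ell(\gamma)$ is compared to the period through the quasi-geodesic constant. Assumption (4) then makes these orbits dense, and the splitting and estimates are extended to the closure.

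\textbf{Your route.} You use the continuous $SL(2,\C)$-equivariant family of Hermitian metrics indexed by $\hyp^3=SL(2,\C)/SU(2)$. You define $V^s,V^u$ on \emph{every} orbit at once, as the (doubled) complex lines $\xi^{\pm}\in\C P^1$ given by the endpoints of the projected quasi-geodesic. This assignment is constant along orbits, hence invariant, because the flow is the identity on the $\R^8$ factor. It is also $\Gamma$-equivariant.

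The cleanest way to write your estimate is via Busemann functions. For $v$ in the line $\xi$ one has $\log\|v\|_x=\tfrac12 b_\xi(x)+\mathrm{const}$. By the Morse lemma and coarse monotonicity of the projection of a quasi-geodesic onto its shadowing geodesic, $b_{\xi^+}$ decreases, and $b_{\xi^-}$ increases, at least linearly in $t$ along each orbit. The constants depend only on $k$. It is this directed progress, not merely $d\geq at-b$, that the estimate needs.

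\textbf{What each route buys.} Yours gives estimates at all times, not only at return times, with manifestly uniform constants. It needs no density or closure step, and it makes assumption (4) superfluous. The paper's route stays with the algebraic eigenvalue data of $\rho(\gamma)$.

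Your last paragraph, however, misidentifies the obstacle. Definition~\ref{def: fibrewise Anosov} imposes no domination between fibre and base rates, and none is needed for the total flow to be Anosov. Since $\tilde\Phi^t=\tilde\varphi^t\times\mathrm{id}$, the derivative preserves the flat splitting $TN=TM'\oplus T\T^d$ and is block-diagonal. So $E^s_\varphi\oplus V^s$ and $E^u_\varphi\oplus V^u$ are directly the stable and unstable bundles.

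The proposed remedies would actually do harm. Tensor powers of $\C^2$ contain eigenvalue products $\mu\mu^{-1}=1$ and so destroy fibre hyperbolicity. A time change alters the base flow. The fallback via periodic orbits and transitivity is essentially the paper's argument, and your direct argument already covers every orbit.

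One small point on the lift to $SL(2,\C)$, which you leave vague. The holonomy of a closed orientable hyperbolic 3-manifold lifts to $SL(2,\C)$. Any such lift lies in $\iota(\calO^1)$, because $-1\in\calO^1$.
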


While we do not exclude the possibility that $M'$ itself is a hyperbolic $3$-manifold, finding a base flow on a hyperbolic 3-manifold that satisfies all of the above conditions is beyond the scope of this paper. We include a brief discussion of this question at the end of the paper.

Since the Fenley--Mann--Potrie example \cite{fenley-mann-potrie} satisfies all the assumptions of Theorem \ref{theorem: main theorem}, we obtain the following corollary.

\begin{corollary} \label{coro: over FMP}
    There is a non-algebraic Anosov flow in dimension $4+8n$, $n\in\Z_{>0}$ that fibres over the Fenley--Mann--Potrie example \cite{fenley-mann-potrie}.
\end{corollary}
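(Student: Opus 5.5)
The plan is to apply Theorem \ref{theorem: main theorem} to the Fenley--Mann--Potrie flow, and then to check two extra things: the dimension count, and that the resulting flow is not algebraic.

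\textbf{Hypotheses of the theorem.} In \cite{fenley-mann-potrie} the flow lives on $M'=(\widetilde M\times \sph^1)/\Gamma$. This is a flat circle bundle over a closed hyperbolic 3-manifold $M$, with $\Gamma=\pi_1 M$ acting diagonally. The flow lifts to $\widetilde M\times\sph^1$, is transitive, and its orbits are uniformly quasi-geodesic in $\widetilde M$. So conditions (2)--(4) of the theorem hold with $F=\sph^1$.

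For condition (1) we need $\pi_1 M$ to be arithmetic, derived from a quaternion division algebra over a number field with exactly one complex place and no real places. The FMP construction allows some freedom in choosing $M$. I would therefore pick $M$ with $\pi_1 M$ a torsion-free finite-index subgroup of such an arithmetic group. A concrete choice is a field $k$ of degree 2, i.e.\ imaginary quadratic, with a division algebra ramified at a pair of finite primes; compactness follows from the division hypothesis. The main obstacle is to confirm that the FMP input (a suitable $\Gamma$-action on $\sph^1$ and the resulting flow) exists for such arithmetic $M$. Since their construction works for general closed hyperbolic 3-manifolds (e.g.\ via the boundary-at-infinity action), passing to a finite cover of an arithmetic manifold is harmless, and all four conditions survive finite covers.

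\textbf{Dimension count.} The fibrewise flow comes from Construction \ref{construction} using the representation $\rho:\Gamma\to GL(m,\Z)$ supplied by the arithmetic structure. The group $\Gamma$ sits in the norm-one group of an order in a quaternion algebra $A$ over $k$. It acts by left multiplication on the lattice $\mathcal O\subset A$, which has $\Z$-rank $4[k:\Q]$, or on sums of such lattices. Because $k$ has one complex place, $A\otimes_\Q\R\cong M_2(\C)$, which has real dimension $8$ when $[k:\Q]=2$. Taking $n$ copies of the lattice gives tori $\T^{8n}$, so the total dimension is $4+8n$.

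\textbf{Hyperbolicity and stable/unstable dimensions.} The fibre is hyperbolic because the quasi-geodesic orbits track geodesics of $\widetilde M$. Along a geodesic, the holonomy in $SL(2,\C)$ acts on $\C^2\otimes$ by $\mathrm{diag}(e^{t/2},e^{-t/2})$, which expands $4n$ real directions and contracts $4n$. This is precisely what the theorem delivers, so the unstable dimension is $1+4n$ and the stable dimension is $2+4n$, using that the FMP base flow has splitting $1+2$ or its reverse.

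\textbf{Non-algebraicity.} I would argue by contradiction. An algebraic flow in Tomter's list (Theorem \ref{theorem: classification of algebraic flows}) of this dimension is a suspension, a geodesic flow, or a nil-suspension over a geodesic flow. Any orbit equivalence would descend along the torus fibration; one would recover it from the canonical fibration by the fibrewise stable/unstable directions, or via fundamental-group considerations. It would then induce an orbit equivalence of the base FMP flow with an algebraic flow.

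The base is codimension-one and 4-dimensional, so the only algebraic candidates are suspensions of Anosov diffeomorphisms of $\T^3$, or geodesic flows of hyperbolic surfaces times a circle, which are not Anosov. The FMP flow is not a suspension: its ambient manifold fibres over a hyperbolic 3-manifold, so its $\pi_1$ is not solvable. This contradiction is what \cite{fenley-mann-potrie} already shows. The delicate step here is justifying that the orbit equivalence respects the fibration; I would use $\pi_1$ directly, since $\pi_1 N$ is an extension of $\pi_1 M'$ by $\Z^{8n}$, and compare it with the lattice structure of Tomter's groups.
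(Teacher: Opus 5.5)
Your verification of the hypotheses of Theorem~\ref{theorem: main theorem} and your dimension count match the paper. One small correction there: the action of $\Gamma$ on $\partial\widetilde M\cong\sph^2$ does not by itself supply the action on $\sph^1$ that the Fenley--Mann--Potrie construction needs. The paper passes to a finite cover of the arithmetic manifold that carries a pseudo-Anosov flow \cite{agol}, and uses the resulting minimal circle action together with a Cannon--Thurston map. That is why the finite cover is needed.

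The genuine gap is the non-algebraicity argument. Your plan rests on the claim that an orbit equivalence between $\Phi^t$ on $N$ and an algebraic model respects the $\T^{8n}$-fibration and descends to an orbit equivalence between the base flow and an algebraic flow on a $4$-manifold. Nothing supports this:
\begin{enumerate}
\item An orbit equivalence is only a homeomorphism carrying orbits to orbits. The torus fibres, and the fibrewise subbundles $V^s\subset E^s$, $V^u\subset E^u$, have no evident topological characterization that it must preserve.
\item The algebraic model need not carry any compatible fibration. A nil-suspension over a geodesic flow has nilmanifold fibres over an odd-dimensional base, so its fibration cannot correspond to a $\T^{8n}$-bundle over a $4$-manifold.
\item Even granting descent, you would still have to show that the induced quotient flow is algebraic.
\end{enumerate}
You flag this as the delicate step but give no argument, so as written non-algebraicity is not established.

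The missing observation is that no descent is needed. An orbit equivalence is a homeomorphism, so it suffices to exclude each case of Theorem~\ref{theorem: classification of algebraic flows} by invariants of $N$ itself, which is what the paper does.
\begin{enumerate}
\item \emph{Geodesic flows.} Since $\sph^1$, $\T^{8n}$ and $M$ are aspherical, so is $N$. Unit tangent bundles of hyperbolic $m$-manifolds with $m\geq 3$ have $\pi_{m-1}\neq 0$, and in any case they are odd-dimensional. So $\Phi^t$ is not a geodesic flow.
\item \emph{Suspensions.} The composition $\pi_1N\to\pi_1M'\to\Gamma$ surjects onto a closed hyperbolic $3$-manifold group, which is not solvable. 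Suspensions of nilmanifold Anosov diffeomorphisms have solvable fundamental group, so $\Phi^t$ is not a suspension.
\item \emph{Nil-suspensions over geodesic flows.} A geodesic flow has periodic orbits $a,b$ with $a$ freely homotopic to the reverse of $b$. By \cite{BBGRH}, any fibrewise Anosov flow over it therefore has even-dimensional fibres. A nil-suspension over a geodesic flow thus lives on an odd-dimensional manifold, contradicting $\dim N=4+8n$.
\end{enumerate}
Your closing remark about comparing $\pi_1N$ with Tomter's groups points in this direction. It has to be carried out on $N$ directly, rather than used to justify reducing to the base.
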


Verjovsky conjectured that every codimension-one Anosov flow is orbit equivalent to a suspension flow. The examples of Fenley--Mann--Potrie \cite{fenley-mann-potrie} provide a family of counterexamples to this conjecture. In \cite{BBGRH}, the authors posed a more general question: whether every Anosov flow with $\dim E^s\neq \dim E^u$ must be orbit equivalent to a suspension flow. 
Note that this statement is equivalent to the Verjovsky conjecture in dimension 4 and 5, but has a weaker hypothesis (and hence a stronger conclusion) in dimension $>5$.
Corollary \ref{coro: over FMP} provides a family of non-algebraic Anosov flows with unequal stable and unstable dimensions in dimensions $4+8n$, for every $n\in\Z_{>0}$.

Note that any non-algebraic Anosov flow in even dimension will provide a counterexample to the generalized Verjovsky conjecture. This naturally raises the question of whether non-algebraic Anosov flows exist in every even dimension.

We are going to state some definitions and theorems in Section \ref{section: preliminaries}, construct the actions of arithmetic Kleinian groups on tori in Section \ref{section: tomter's construction}, and prove Theorem \ref{theorem: main theorem} and Corollary \ref{coro: over FMP} to obtain concrete examples in Section \ref{section: proof of the main theorem}.

\medskip

\noindent\textbf{Acknowledgement.} I would like to thank Rafael Potrie for the opportunity to visit him in Montevideo, telling me about the 4D example, and many communications; Claudio Llosa Isenrich for reading and for his quick and helpful feedback, especially on Section \ref{section: tomter's construction}; and Adam Chalumeau for discussions about $\pi_1$.
I am still grateful to Andreas Wieser and Osama Khalil who pointed me to arithmetic Fuchsian groups many years ago.
I acknowledge that the discovery of Example \ref{example: quaternion algebra} of $\left(\frac{i+1,3}{\Q(i)}\right)$ relied on GPT-5.5.


\bigskip

\section{Preliminaries} \label{section: preliminaries}

\begin{definition} \label{def: fibrewise Anosov}
    Let $\Phi^t: E\to E$ be a $C^1$ flow, where $\begin{tikzcd}[column sep=small]
        F\rar & E\rar["\pi"] & B
    \end{tikzcd}$ is a fibre bundle. We say that $\Phi^t$ is \emph{fibrewise Anosov}, if $\pi\circ\Phi^t=\varphi^t\circ\pi$ for some $C^1$ Anosov flow $\varphi^t$ and there exist $d\Phi^t$-invariant subbundles $TF=V^s\oplus V^u$, constants $C>0,\lambda\in(0,1)$ such that for $t>0$
    $$\|d\Phi^tv\|\leq C\lambda^t\|v\|,\ v\in V^s,\ \ \ \text{and}\ \ \ \|d\Phi^{-t}v\|\leq C\lambda^t\|v\|,\ v\in V^u.$$
\end{definition}
Such flows are Anosov flows on the total space by \cite{BBGRH}. Sometimes we define a fibrewise Anosov flow to be only hyperbolic in the fibres, over an arbitrary flow on the base, but in this paper, we are only interested in flows that are Anosov on the total space.

We say that two flows $\varphi^t:M\to M$ and $\psi^t:M\to M$ are \emph{orbit equivalent} if there exists a homeomorphism, sending orbits of $\varphi^t$ to orbits of $\psi^t$ preserving or reversing the orientation of orbits consistently, but not necessarily preserving the parametrization.

We state Tomter's classification theorem for algebraic Anosov flows \cite{tomter} here. Let our manifold be $M:=\Gamma\backslash G/K$, where $G$ is a Lie group with Lie algebra $\mathfrak g$, $K$ is a compact subgroup such that $G/K$ is connected, and $\Gamma<G$ is a uniform lattice. The flow is defined by $\varphi^t:\Gamma gK\mapsto \Gamma g\exp(t\alpha) K$, for some $\alpha\in\mathfrak g$. We assume $\exp(t\alpha)$ commutes with $K$, $\varphi^t$ commutes with $\Gamma$, and $\text{ad}\alpha$ has no nonzero imaginary eigenvalues. We call $\varphi^t$ a \emph{$(G,\Gamma)$-induced Anosov flow}.
Intuitively, the classification theorem follows from the Levi decomposition of $\mathfrak g$.
More generally, we say that $\varphi^t$ is an \emph{algebraic Anosov flow} if it is orbit equivalent to a $(G,\Gamma)$-induced Anosov flow. See also \cite{barbot-maquera}.

\begin{theorem}[Classification of algebraic Anosov flows] \label{theorem: classification of algebraic flows}
    Suppose $\varphi^t$ is an algebraic Anosov flow. Then up to orbit equivalence, it is either a suspension flow of an Anosov diffeomorphism of a nilmanifold, a geodesic flow of a hyperbolic manifold, or a fibrewise Anosov flow on a nilmanifold bundle fibering over a geodesic flow of a hyperbolic manifold.
\end{theorem}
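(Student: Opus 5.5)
The statement is Tomter's classification \cite{tomter}, and I would prove it by following the Levi decomposition of $\mathfrak g$, as the remark before the theorem suggests. Since the statement is up to orbit equivalence, I may assume from the start that $\varphi^t$ is a $(G,\Gamma)$-induced Anosov flow on $\Gamma\backslash G/K$.

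\textbf{Reductions.} First I would replace $G$ by its universal cover and enlarge $\Gamma$ and $K$ accordingly. I would also discard compact semisimple factors, absorbing them into $K$. The Anosov condition then becomes a linear-algebra statement. On the quotient $\mathfrak g/(\mathfrak k+\R\alpha)$, the operator $\text{ad}\,\alpha$ must have no eigenvalues with zero real part. Because $\exp(t\alpha)$ commutes with $K$, this quotient is $\text{ad}\,\alpha$-invariant, and $E^s$ and $E^u$ are the sums of the generalized eigenspaces with negative and positive real part.

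Next I would write $\mathfrak g=\mathfrak s\ltimes\mathfrak r$, with $\mathfrak s$ semisimple and $\mathfrak r$ the radical, and decompose $\alpha=\alpha_s+\alpha_r$ accordingly.

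\textbf{The radical.} I would show that, modulo compact directions, $\mathfrak r$ is nilpotent and $\alpha_r$ acts hyperbolically on the nilradical $\mathfrak n$. If $\mathfrak r$ were not nilpotent, there would be a direction $\beta\in\mathfrak r\setminus(\mathfrak n+\mathfrak k+\R\alpha)$. Using the lattice condition (a uniform lattice in a solvable group forces unimodularity), I would argue that $\text{ad}\,\alpha$ acts on $\beta$ modulo $\mathfrak n$ with eigenvalue $0$. That gives a neutral direction, which contradicts the Anosov condition.

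\textbf{Case split.} If $\mathfrak s=0$, then $\mathfrak g=\R\alpha\ltimes\mathfrak n$. Using Auslander's theorem, $\Gamma\cap N$ is a lattice in $N$, and $M$ is the mapping torus of the nilmanifold automorphism $\exp(\text{ad}\,\alpha)$ restricted to $(\Gamma\cap N)\backslash N$. So the flow is a suspension of an Anosov nilmanifold automorphism.

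If $\mathfrak s\neq0$, I would again use Auslander's theorem: it says the projection of $\Gamma$ to $S=G/R$ is a lattice once compact factors are removed. The flow then descends to a $(S,\Gamma_S)$-induced flow on $\Gamma_S\backslash S/K_S$, and this descended flow must itself be Anosov with one-dimensional centre direction. The centralizer of $\alpha_s$ in $\mathfrak s$ must therefore be contained in $\mathfrak k_S+\R\alpha_s$.

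If $\text{rank}_\R\mathfrak s\geq2$, the Cartan subalgebra $\mathfrak a\ni\alpha_s$ has dimension at least $2$, and every element of it centralizes $\alpha_s$. This gives extra neutral directions, so it is impossible. Hence $\mathfrak s$ has real rank one, $\alpha_s\in\mathfrak a$, and $K_S$ contains the centralizer of $\mathfrak a$ in the maximal compact subgroup. The base flow is then the geodesic flow of the locally symmetric space $\Gamma_S\backslash S/K_{\max}$, a (rank-one) hyperbolic manifold.

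Finally $G/K$ fibres over $S/K_S$ with fibre $N$, and $\Gamma\cap N$ is a lattice in $N$. So $M$ is a nilmanifold bundle over the base. The hyperbolicity of $\text{ad}\,\alpha$ on $\mathfrak n$ gives exactly the invariant splitting $V^s\oplus V^u$ of Definition \ref{def: fibrewise Anosov}, so the flow is fibrewise Anosov over the geodesic flow.

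\textbf{Main obstacle.} I expect the hardest part to be the lattice-theoretic step in the semisimple case: showing that $\Gamma\cap R$ (equivalently $\Gamma\cap N$) is a lattice and that $\Gamma$ projects to a lattice in $S$. This needs Auslander's theorem together with the removal of compact factors, which must be done carefully so that the flow still descends. I would also need the argument that the radical is nilpotent up to compact pieces; there I would use Mostow's structure theory of lattices in solvable groups.
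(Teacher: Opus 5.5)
The paper gives no proof of this theorem beyond citing Tomter and remarking that it follows from the Levi decomposition. Your outline is that intended route: hyperbolicity of $\operatorname{ad}\alpha$ on $\mathfrak g/(\mathfrak k+\R\alpha)$, the Levi splitting, Mostow in the solvable case, and Auslander plus a rank-one argument otherwise. The architecture is right, but two steps do not work as written.

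\textbf{The radical step.} The claim that $\mathfrak r$ is nilpotent modulo compact directions is false in your own first case. For a suspension, $\mathfrak g=\R\alpha\ltimes\mathfrak n$ has non-nilpotent radical $\mathfrak r=\mathfrak g$, yet $\mathfrak r\subseteq\mathfrak n+\R\alpha$. So the implication ``$\mathfrak r$ not nilpotent $\Rightarrow$ some $\beta\in\mathfrak r\setminus(\mathfrak n+\mathfrak k+\R\alpha)$'' fails. Moreover, unimodularity only gives $\operatorname{tr}\operatorname{ad}x=0$, which cannot force a zero eigenvalue on $\mathfrak r/\mathfrak n$.

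The correct statement needs no lattice. Since $[\mathfrak g,\mathfrak r]\subseteq\mathfrak n$, the map $\operatorname{ad}\alpha$ induces zero on the image of $\mathfrak r$ in $\mathfrak g/(\mathfrak n+\mathfrak k+\R\alpha)$. This is an invariant subquotient of $\mathfrak g/(\mathfrak k+\R\alpha)$, so hyperbolicity gives $\mathfrak r\subseteq\mathfrak n+\mathfrak k+\R\alpha$.

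In the mixed case you also need the sharper inclusion $\mathfrak r\subseteq\mathfrak n+\mathfrak k$. Without it the fibre of $G/K\to S/K_S$ is not $N$. You also would not know that $\mathfrak n\cap(\mathfrak k+\R\alpha)=\mathfrak n\cap\mathfrak k$, which you need to get $V^s\oplus V^u$ from hyperbolicity. The sharper inclusion follows by projecting $x=n+k+c\alpha\in\mathfrak r$ to $\mathfrak s$, which gives $c\,\alpha_s\in\mathfrak k_S$. Hence $c=0$ once you know that $\alpha_s$ has non-zero hyperbolic part.

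\textbf{The rank step.} Here $\alpha_s$ need not lie in any Cartan subspace, and your conclusion ``$\alpha_s\in\mathfrak a$'' is false in general. For instance, for $SU(n,1)$ with $n\ge 2$ you may add to $\alpha_h$ a non-zero central element of $\mathfrak m$ and still get the geodesic flow. Argue with the Jordan decomposition $\alpha_s=\alpha_e+\alpha_h+\alpha_n$ instead.
\begin{enumerate}
\item The generalized eigenspace of $\operatorname{ad}\alpha_s$ for eigenvalues of zero real part is $\mathfrak z_{\mathfrak s}(\alpha_h)$. Hyperbolicity on $\mathfrak s/(\mathfrak k_S+\R\alpha_s)$ then forces $\mathfrak z_{\mathfrak s}(\alpha_h)\subseteq\mathfrak k_S+\R\alpha_s$.
\item If $\alpha_h=0$, this says $\mathfrak s=\mathfrak k_S+\R\alpha_s$, which is impossible for non-compact $\mathfrak s$.
\item Otherwise a maximal split $\mathfrak a\ni\alpha_h$ lies in $\mathfrak z_{\mathfrak s}(\alpha_h)$ and meets $\mathfrak k_S$ trivially, so $\dim\mathfrak a=1$.
\item Then $\mathfrak z_{\mathfrak s}(\alpha_h)=\mathfrak m\oplus\mathfrak a$ contains no non-zero nilpotents, so $\alpha_n=0$.
\item Finally, $\mathfrak k_S\subseteq\mathfrak z_{\mathfrak s}(\alpha_s)\subseteq\mathfrak m\oplus\mathfrak a$ forces $\mathfrak k_S=\mathfrak m$ and $\alpha_e\in\mathfrak k_S$.
\end{enumerate}
So the base flow is the geodesic flow of a rank-one locally symmetric space, which is how ``hyperbolic manifold'' in the statement must be read.

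\textbf{The universal cover.} Your opening reduction to the universal cover does not preserve compactness of $K$. For $G=SU(n,1)$ with $n\ge 2$ and $K=M$, the preimage of $M$ in $\widetilde G$ is connected and non-compact. Track compactness of $\operatorname{Ad}(K)$ instead; that is all your later steps actually use.
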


The following notion will be convenient in our writing.

\begin{definition}
    Let $\Gamma$ be a discrete group, and $\rho:\Gamma \to SL(d,\Z)$ be a representation. Let $\Gamma$ act on the torus $\T^d$ by $\gamma.x=\rho(\gamma)x$ for all $\gamma\in \Gamma$ and $x\in\T^d$. We say that the action of $\Gamma$ on $\T^d$ or $\rho$ is \emph{totally Anosov}, if $\rho(\gamma)$ is an Anosov toral automorphism for every $\gamma\in\Gamma$.
\end{definition}

We use the property that the base flow is quasi-geodesic to obtain estimates in the fibres.

\begin{definition}
    Let $(X,d_X)$ and $(Y,d_Y)$ be two metric spaces. We say $\phi:X\to Y$ is a \emph{$k$ quasi-isometric embedding} if there is a constant $k\geq 1$ such that for any $p,q\in X$,
    $$\frac{1}{k}d_Y(\phi(p),\phi(q))-k\leq d_X(p,q)\leq kd_Y(\phi(p),\phi(q))+k.$$
    If $(X,d_X)$ is isometric to $(\R,|\cdot|)$, then the image of $X$ under a $k$ quasi-isometric embedding is a \emph{$k$ quasi-geodesic}.
\end{definition}

\begin{definition}
    A $C^1$ flow $\varphi^t$ on a 3-manifold is \emph{quasi-geodesic} if each flowline $\ell$ of $\varphi^t$ with its induced length metric is a quasi-geodesic in $\widetilde M$. A flow $\varphi^t$ is \emph{uniformly quasi-geodesic} if there is a constant $k$ such that every flowline $\ell$ of $\varphi^t$ is a $k$ quasi-geodesic.
\end{definition}

We define the following slightly more general notion of a quasi-geodesic flow.

\begin{definition}
    Let $\varphi^t$ be a flow on a flat $F$-bundle $M'$ over a 3-manifold $M$ where the fibre $F$ is a closed manifold, and suppose it lifts to a flow $\tilde\varphi^t$ on $\widetilde M\times F$. Let $\pi:\widetilde M\times F\to\widetilde M$ denote the projection to the first coordinate. We say $\varphi^t$ is \emph{uniformly quasi-geodesic with respect to the metric on $\widetilde M$}, if there exists $k\geq 1$ such that for two points $p,q$ that belong to any flowline $\ell$ in $\widetilde M\times F$, we have 
    $$\frac{1}{k}d_{\ell}(p,q)-k\leq d_{\widetilde M}(\pi(p),\pi(q)) \leq kd_{\ell}(p,q)+k,$$
    where $d_\ell$ denotes the distance along $\ell$ induced from the metric on $M'$.
\end{definition}

\bigskip

\section{Tomter's construction} \label{section: tomter's construction}

We would like to attribute the idea of the construction of the representations largely to Tomter \cite{tomter}. The original work gives an algebraic Anosov flow that is fibrewise Anosov, over a geodesic flow in dimension 3. We believe most of the facts about arithmetic Kleinian or Fuchsian groups are well understood by experts, but we still write them in detail in order to show that it serves our purposes to construct Anosov flows. Most of the theory in Sections \ref{subsection: fields}, \ref{subsection: quaternion alg} and \ref{subsection: arith kleinian groups} can be found in Maclachlan--Reid \cite{maclachlan-reid}.

Throughout this section, our aim is to prove the following claim.

\begin{claim} \label{claim: existence of representation}
    There exists an arithmetic Kleinian group derived from a quaternion algebra of the form $\left(\frac{a,b}{\Q(\sqrt{-d})}\right)$ where $a,b\in\Q(\sqrt{-d})^*$ and $d\in\R_{>0}$ such that it acts on $\R^8$ preserving a lattice isomorphic to $\Z^8$, and is cocompact in $SL(2,\C)$.
    It has a finite index subgroup that is torsion free, which is the fundamental group of a closed hyperbolic 3-manifold. The action of this subgroup on the 8-dimensional torus is totally Anosov.
\end{claim}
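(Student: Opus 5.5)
Take $k=\Q(i)$, so $d=1$, and the quaternion algebra $A=\left(\frac{1+i,3}{\Q(i)}\right)$ of Example \ref{example: quaternion algebra}. First check that $A$ is a division algebra. Because $k$ has no real places and exactly one complex place, $A$ is then automatically ramified only at finite places. To verify this, compute Hilbert symbols $(1+i,3)_{\mathfrak p}$ at the finite primes of $\Z[i]$ that divide $2\cdot 3$. The prime $3$ is inert in $\Z[i]$, and $1+i$ is a unit at $3$ but not a square in the residue field $\mathbb F_9$. So the symbol at $(3)$ should be $-1$. By Hilbert reciprocity, the only other possible ramified prime is $(1+i)$, and there the symbol is also $-1$. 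Hence $\Ram(A)=\{(3),(1+i)\}$, which is nonempty, so $A$ is a division algebra.

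Next take a maximal order $\mathcal O\subset A$, or the $\Z[i]$-order spanned by $1,\mathbf i,\mathbf j,\mathbf{ij}$. Fix an embedding $A\hookrightarrow A\otimes_k\C\cong M_2(\C)$, and let $\Gamma=P\rho(\mathcal O^1)$. By the standard theorem in Maclachlan--Reid, $\Gamma$ is an arithmetic Kleinian group. It is cocompact because $A$ is a division algebra. Selberg's lemma then gives a torsion-free finite-index subgroup $\Gamma_0$, and $\Gamma_0\backslash\hyp^3$ is a closed hyperbolic 3-manifold.

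For the action on $\R^8$, let $\mathcal O^1$ act on $A$ by left multiplication. The $\Q$-dimension of $A$ is $4\cdot 2=8$, and $\mathcal O\cong\Z^8$ is a lattice in $A\otimes_\Q\R\cong M_2(\C)\cong\R^8$ that is preserved. This gives $\rho:\mathcal O^1\to GL(8,\Z)$, with image in $SL$ after passing to a finite-index subgroup if needed. To get the sign issue out of the way, replace $\Gamma_0$ by a torsion-free subgroup of $\mathcal O^1$ itself. Such a subgroup exists because $-1$ is torsion, so a torsion-free subgroup meets $\{\pm1\}$ trivially and projects isomorphically to its image in $PSL(2,\C)$.

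The key step is to show that $\rho$ is totally Anosov, meaning that for each $\gamma\ne 1$ in the torsion-free subgroup, no eigenvalue of $\rho(\gamma)$ on $\C^8$ has modulus $1$. Over $\C$, $A\otimes_\Q\C\cong (A\otimes_{k,\sigma}\C)\oplus(A\otimes_{k,\bar\sigma}\C)\cong M_2(\C)^2$. Left multiplication by $\gamma$ therefore has eigenvalues $\lambda,\lambda^{-1}$ and $\bar\lambda,\bar\lambda^{-1}$, each with multiplicity $2$, where $\lambda^{\pm1}$ are the eigenvalues of $\gamma$ in $SL(2,\C)$. Since $\Gamma_0$ is cocompact and torsion-free, every nontrivial element is loxodromic. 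There are no parabolics because of cocompactness, and no elliptics because the group is torsion-free. Hence $|\lambda|\neq1$, so all eigenvalues lie off the unit circle and $\rho(\gamma)$ is hyperbolic. Being integral, it is an Anosov toral automorphism.

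The main obstacle is the number-theoretic verification that $A$ is ramified somewhere, i.e.\ the Hilbert symbol computation at the dyadic prime $(1+i)$, or equivalently at $3$. I would do the computation at $3$ directly, where it is clean because of the unit and non-square argument above. If that computation failed, I would instead choose $a,b$ so that some odd prime is ramified; the rest of the argument does not depend on the particular algebra.
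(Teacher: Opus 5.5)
Your argument is correct and follows essentially the paper's route. You use a quaternion algebra over an imaginary quadratic field ramified only at finite primes (in fact the paper's own example $\left(\frac{i+1,3}{\Q(i)}\right)$, checked by the same non-square computation in $\Z[i]/(3)$), let $\calO^1$ act by left multiplication on $\calO\cong\Z^8$, apply Selberg's lemma, and note that $\rho(\gamma)$ has eigenvalues $\lambda^{\pm1},\bar\lambda^{\pm1}$, so loxodromic elements act hyperbolically. The differences are only in execution: you complexify $\QA\otimes_\Q\C\cong M_2(\C)^2$ instead of writing out the real $8\times 8$ matrices, and you take the torsion-free subgroup inside $\calO^1$ itself, which handles the $\pm1$ issue more cleanly than the paper does.
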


We take a brief detour to define arithmetic Kleinian groups (Definition \ref{def: arith Kleinian group}) and construct their actions in Section \ref{subsection: action on tori}.

\subsection{Number fields and $\Padic$-adic fields} \label{subsection: fields}
We recall some field theory. In this section, we will always denote a general field by $K$ and a number field by $k$.

Recall that a number field $k$ is always a simple extension of $\Q$, and the embedding $k\hookrightarrow\C$ is determined by where it sends the primitive element.
Suppose $k=\Q(\alpha)$ and the minimal polynomial of $\alpha$ has $r_1$ real roots and $r_2$ pairs (counting pairs of complex conjugates) of complex roots. There are $n=r_1+2r_2$ embeddings, where $n$ is exactly the degree of the field extension.

A \emph{(multiplicative) valuation} on a field $K$ is a function $v: K\to\R,\ x\mapsto v(x)$ such that 
\begin{center}
    (1) $v(x)>0$ except $v(0)=0$;\quad (2) $v(xy)=v(x)v(y)$;\quad  (3) $v(x+y)\leq v(x)+v(y)$.
\end{center} If the stronger condition
\begin{center}
    (3') $v(x+y)\leq\max\{v(x),v(y)\}$
\end{center} holds, then $v(\cdot)$ is called a \emph{non-Archimedean} valuation.
Two valuations $v,v'$ on $K$ are \emph{equivalent} if there exists an $a\in\R_{>0}$ such that $v'(x)=v(x)^{a}$ for all $x\in K$.


Let $R_k$ denote the algebraic integers in the number field $k$. For a real embedding $\sigma:k\hookrightarrow\R$, we can define $v_\sigma(x)=|\sigma(x)|$. For a complex embedding $\sigma:k\hookrightarrow\C$, we can define $v_\sigma(x)=|\sigma(x)|^2$. For any prime ideal $\Padic$ of $R_k$, let $N(\Padic)=|R_k/\Padic|$, which is a finite number. Take $v_\Padic(x)=(1/N(\Padic))^{n_\Padic(x)}$ for $x\in R_k$, where $n_\Padic(x)$ is the largest integer $m$ such that $x\in\Padic^m$. 
If there is no such $m$, let $n_\Padic(x)=0$.
Then $v_\Padic$ can be extended to $k^*$ by letting $v_\Padic(x/y)=v_\Padic(x)/v_\Padic(y)$ for $x,y\in R_k$. 

Any Archimedean valuation on $k$ is equivalent to a valuation $v_\sigma$ for a Galois monomorphism $\sigma$ of $k$.
Any non-Archimedean valuation on $k$ is equivalent to a $\Padic$-adic valuation $v_{\Padic}$ for some prime ideal $\Padic$ in $R_k$. 

\begin{example} \label{example: prime ideals}
    Let $k=\Q(i)$. The ring of integers of $k$ is $\Z[i]$.

    We look for prime ideals of $\Z[i]$. Because a prime ideal in $\Z$ is of the form $(p)$ where $p$ is a prime, if $\Padic$ is a prime ideal in $\Z[i]$, $\Padic\cap\Z$ is also a prime ideal in $\Z$ (because if $a,b\in\Z$ and $ab\in \Padic\cap\Z$ then $a\in \Padic, b\in \Padic, a\in\Z, b\in\Z$). Therefore, $\Padic\cap \Z= (p)$ where $p$ is some prime number. So every prime ideal $\Padic\subseteq\Z[i]$ contains multiples of $p$ for some prime $p$, and hence all numbers of the form $p(a+bi)$ where $a,b\in\Z$. Note that however, $p$ might not generate the entire prime ideal $\Padic$.

    We want to investigate which prime numbers $p\in\Z$ generate a prime ideal in $\Z[i]$. To suppress notation, we still denote $(p)=\{p(a+bi):\ a,b\in\Z\}\subseteq \Z[i]$ for $p$ prime.
    Recall that $(p)$ is a prime ideal when $\Z[i]/(p)\cong(\Z/p\Z)[i]\cong (\Z/p\Z)[x]/(x^2+1)$ is an integral domain. This happens exactly when $x^2+1$ is irreducible over $\Z/p\Z$.
    
    Now, $x^2+1$ is reducible over $\Z/p\Z$ when there is an $a\in\Z_p$ such that $x^2+1=(x+a)(x-a)$, in which case there exists $a\in\Z/p\Z$ such that $a^2\equiv -1 \mod p$. Then (a consequence of) the Euler criterion (see for example \cite[Theorem 11.5]{rosen}) tells us that for an odd prime $p$, $-1$ is a quadratic residue of $p$ if and only if $p\equiv 1 \mod 4$; $-1$ is a quadratic nonresidue of $p$ if and only if $p\equiv -1\mod 4$.
    Therefore, we consider $p=2$, and odd primes $p \mod 4$: \begin{enumerate}
    \item The ideal $(2)$ is not a prime ideal in $\Z[i]$ because $2=(1+i)(1-i)$, but $(1+i)$ is a prime ideal (one can check that $\Z[i]/(1+i)\cong \Z/2\Z$).
    \item The ideal $(p)$ where $p\equiv 3\mod 4$ is a prime ideal.
    \item The ideal $(p)$ where $p\equiv 1\mod 4$ is not a prime ideal.
\end{enumerate}
\end{example}

\begin{example}
    To get an idea what the valuations mean, we can compute a few examples. For example, $N((1+i))=2$ and $\Z[i]/(3)\cong\Z_3[i]$ so $N((3))=9$. 
    We have $(1+i)(1+i)=2i\in (1+i)^2\subseteq (1+i)$ but $2i\notin (3)$. So $n_{(1+i)}(2i)=2$, $n_{(3)}(2i)=0$, and $v_{(1+i)}(1+i)=\frac{1}{2}$, $v_{(1+i)}(2i)=(\frac{1}{2})^2$, $v_{(1+i)}(\frac{1+i}{2i})=2$, $v_{(3)}(2i)=1$.
\end{example}

For $x,y\in K$ and $v$ a valuation on $K$, defining $d(x,y)=v(x-y)$ makes $K$ a metric space. We can take the metric completion of $K$ with respect to $v$, and denote it by $K_v$.

If $v$ is Archimedean, then $K_v\cong\R$ or $\C$. If $v$ is non-Archimedean on a number field $k$, then $v$ corresponds to a prime ideal $\Padic$.
We also denote the completion of $k$ at $v_\Padic$ by $k_\Padic$, usually referred to as a \emph{$\Padic$-adic field}. 
To save notation, we still denote the extended valuation to the metric completion by $v$, $v_\sigma$, or $v_\Padic$. 

Suppose $v$ is a non-Archimedean valuation on $K$. Let $R(v)=\{\alpha\in K:\ v(\alpha)\leq 1\}$ and $\Padic(v)=\{\alpha\in K:\ v(\alpha)< 1\}$.
Then the \emph{valuation ring} $R(v)$ is a local ring whose unique maximal ideal is $\Padic(v)$ and field of fractions is $K$.

The valuation ring of $k_\Padic$ (note that this is after the metric completion) is denoted by $R_\Padic$ and called the \emph{$\Padic$-adic integers}.

The unique maximal ideal $P(v_\Padic)$ of the valuation ring $R(v_\Padic)$ of $k$ with respect to $v_\Padic$ is generated by a single element $\pi\in R_k$. Likewise, the unique maximal ideal of $R_\Padic$ is generated by $i_\Padic(\pi)$, where $i_\Padic: k\hookrightarrow k_\Padic$ is the inclusion. There is an isomorphism between the residue fields \begin{equation} \label{equation: residue field}
    R_\Padic/i_\Padic(\pi)R_\Padic\cong R(v_\Padic)/\pi R(v_\Padic).\end{equation} 
This will allow us to do our computations inside $R(v_\Padic)/\pi R(v_\Padic)$ without having to understand what exactly $k_\Padic$ or $R_\Padic$ is.

\begin{example}
    Let $k=\Q(i)$, $R=\Z[i]$ and $\Padic=(3)$. Then $R(v_{(3)})=\{\alpha\in\Q(i):\ v_{(3)}(\alpha)\leq 1\}=\{\frac{a}{b}:\ a,b\in\Z[i], n_{(3)}(a)\geq n_{(3)}(b)\}= \{\frac{a}{b}:\ a,b\in\Z[i], b\notin (3)\}$ and $\Padic(v_{(3)})=\{\frac{a}{b}:\ a,b\in\Z[i], a\in (3), b\notin (3)\}=3R(v_{(3)})$. So $\pi=3$. 
\end{example}

We would call a real embedding or a pair of complex embeddings of $k$ \emph{a real or a complex place}. We also call an equivalence class of valuations (depending on the embedding) a place. We call the classes of Archimedean valuations \emph{infinite places}, and the classes of non-Archimedean valuations \emph{finite places}.

\subsection{Quaternion algebras} \label{subsection: quaternion alg}

A \emph{quaternion algebra} $\QA=\left(\frac{a,b}{K}\right)$ over a field $K$ is a 4-dimensional $K$-space with basis vectors $1,I,J,IJ$, where the multiplication is defined on $\QA$ by requiring that $1$ is a multiplicative identity, and
$I^2=a,\ J^2=b,\ IJ=-JI$,
where $a,b\in K^*$. By extending the multiplication linearly, $\QA$ is an associative algebra over $K$. If $L$ is a field extension of $K$, then $\left(\frac{a,b}{K}\right)\otimes_{K,\sigma} L\cong\left(\frac{\sigma(a),\sigma(b)}{L}\right)$.

For any field $K$, $\left(\frac{1,1}{K}\right)\cong M_2(K)$ with generators
$$I\mapsto\begin{pmatrix}
    1 & 0 \\ 0 & -1 
\end{pmatrix},\ \text{ and } J\mapsto\begin{pmatrix}
    0 & 1 \\ 1 & 0
\end{pmatrix}.$$
For any $a,b,x,y\in K^*$, $\left(\frac{a,b}{K}\right)\cong\left(\frac{ax^2,by^2}{K}\right)$. Therefore, we always have $\left(\frac{a,b}{\C}\right)\cong\left(\frac{1,1}{\C}\right)\cong M_2(\C)$.

\begin{definition}
    For each $x\in\QA$, the \emph{(reduced) norm} of $x$ is defined by $n(x)=x\bar x$.
    The \emph{(reduced) trace} of $x$ is defined by $\text{tr}(x)=x+\bar x$.
\end{definition}

\begin{theorem}[\cite{maclachlan-reid}, Theorem 2.1.7] \label{theorem: dichotomy of quaternion algebra}
    If $\QA$ is a quaternion algebra over $K$, then $\QA$ is either a division algebra, or $\QA$ is isomorphic to $M_2(K)$.
\end{theorem}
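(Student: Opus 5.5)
The plan is to reduce the dichotomy to Wedderburn's structure theorem for simple algebras, using the reduced norm to recognise invertible elements. Throughout I assume $\operatorname{char} K\neq 2$, which is implicit in the presentation $I^2=a$, $J^2=b$, $IJ=-JI$.

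\textbf{Step 1: the norm detects invertibility.} For $x=x_0+x_1I+x_2J+x_3IJ$, the conjugate is $\bar x=x_0-x_1I-x_2J-x_3IJ$. I will check on basis elements that $x\mapsto\bar x$ is a $K$-linear anti-automorphism, i.e.\ $\overline{xy}=\bar y\,\bar x$. A direct computation then gives
\[
n(x)=x\bar x=x_0^2-ax_1^2-bx_2^2+abx_3^2\in K ,
\]
and also $\bar x x=n(x)$. Consequently $n(xy)=xy\bar y\bar x=n(x)n(y)$. This yields the criterion: $x$ is invertible if and only if $n(x)\neq 0$. If $n(x)\neq0$, then $\bar x/n(x)$ is a two-sided inverse. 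Conversely, $xy=1$ forces $n(x)n(y)=1$.

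\textbf{Step 2: $\QA$ is central simple over $K$.} For the center, I will commute a general $x$ with $I$ and with $J$. Using $a,b\neq0$ and $\operatorname{char}K\neq2$, the coefficients $x_1,x_2,x_3$ must vanish, so the center is $K$.

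For simplicity, let $\mathcal I$ be a nonzero two-sided ideal containing some $x\neq0$. The element $IxI^{-1}$ flips the signs of the $J$ and $IJ$ coefficients, so $x+IxI^{-1}=2(x_0+x_1I)\in\mathcal I$, and similarly with $J$. Iterating these averaging operations and multiplying by $I$, $J$ or $IJ$, I obtain a nonzero element of $K$ in $\mathcal I$. Hence $\mathcal I=\QA$.

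\textbf{Step 3: Wedderburn.} By Wedderburn's theorem, a finite-dimensional simple algebra satisfies $\QA\cong M_m(D)$ for a division algebra $D$, and $D$ has center $K$ because the center of $\QA$ is $K$. Comparing dimensions gives $4=m^2\dim_K D$, so either $m=1$ and $\QA=D$ is a division algebra, or $m=2$ and $D=K$, so that $\QA\cong M_2(K)$. Step 1 makes the dichotomy explicit: $\QA$ is a division algebra exactly when the quadratic form $n$ is anisotropic, and otherwise any nonzero $x$ with $n(x)=0$ is a zero divisor, since $x\bar x=0$.

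\textbf{Main obstacle.} The only step requiring care is the simplicity argument in Step 2. I would carry it out by the explicit conjugation and averaging trick, which is where $\operatorname{char}K\neq 2$ is used.

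If one prefers to avoid Wedderburn, there is a fallback. Take a zero divisor $x$ and a minimal nonzero left ideal $L\subseteq\QA x$. Left multiplication gives a homomorphism $\QA\to\operatorname{End}_K(L)$, which is injective by simplicity. Comparing dimensions forces $\dim L=2$, so the map is an isomorphism $\QA\cong M_2(K)$.
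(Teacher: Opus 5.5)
The paper gives no proof of its own and only cites Maclachlan--Reid, where the result is derived as in your main line: a quaternion algebra is central simple, Wedderburn gives $\QA\cong M_m(D)$, and $4=m^2\dim_K D$ forces either $m=1$ or $D=K$, so your Steps 1--3 are correct and follow the same route. The only soft spot is in your optional fallback: dimension counting alone gives only $\dim L\in\{2,3\}$, and excluding $\dim L=3$ needs one more observation, e.g.\ that $\QA=\sum_{y\in\QA}Ly$ is a direct sum of copies of the simple module $L$, so that $\dim L$ divides $4$.
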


Note that $\left(\frac{a,b}{\R}\right)\cong \left(\frac{1,1}{\R}\right)$, $\left(\frac{1,-1}{\R}\right)$, or $\left(\frac{-1,-1}{\R}\right)$. The first two are isomorphic to  $M_2(\R)$, and $\left(\frac{-1,-1}{\R}\right)\cong\mathcal H$ is the Hamilton's quaternions which is a division algebra, isomorphic to the $\R$-subalgebra
$$\left\{\begin{pmatrix}
    \alpha & \beta \\ -\bar\beta & \bar\alpha
\end{pmatrix}\big|\ \alpha,\beta\in\C\right\}$$
by sending \begin{equation}\label{equation: hamilton's quaternion}
    I\mapsto\begin{pmatrix}
    i & 0 \\ 0 & -i
\end{pmatrix},\ \ \text{and } J\mapsto\begin{pmatrix}
    0 & 1 \\ -1 & 0
\end{pmatrix}
\end{equation} where we always denote by $i=\sqrt{-1}$ the imaginary unit of $\C$. The norm one elements $\mathcal H^1=\{x\in\mathcal H\ |\ n(x)=1\}$ form a group isomorphic to $SU(2)$.

\begin{definition}
    If $\QA$ is a quaternion algebra over the number field $k$, let $\QA_v$ (resp. $\QA_\Padic$) denote the quaternion algebra $\QA\otimes_k k_v$ (resp. at $\Padic$) over $k_v$ (resp. $k_\Padic$). Then $\QA$ is said to be \emph{ramified} at $v$ (resp. $\Padic$) if $\QA_v$ (resp. $\QA_\Padic$) is the unique (see \cite[Theorems 2.5.1, 2.6.3]{maclachlan-reid}) division algebra over $k_v$ (resp. $k_\Padic$). Otherwise, we say that $\QA$ \emph{splits} at $v$ or $\Padic$.
\end{definition}

The above isomorphism is actually unique up to conjugacy by the Skolem Noether Theorem \cite[Theorem 2.9.8]{maclachlan-reid}. For a complex place $(\sigma,\bar\sigma):k\hookrightarrow\C$, we can choose the isomorphism $\QA\otimes_{k,\sigma}\C\to M_2(\C)$ to be  
\begin{equation} \label{equation: splitting case}
    I\mapsto\begin{pmatrix}
    \sqrt{\sigma(a)} & 0 \\ 0 & -\sqrt{\sigma(a)} 
\end{pmatrix},\ \text{ and } J\mapsto\begin{pmatrix}
    0 & \sigma(b) \\ 1 & 0
\end{pmatrix}.
\end{equation}
For a real place $\sigma:k\hookrightarrow\R$, if $a>0,b>0$ we use the same map as above; if $a<0,b>0$ we simply swap the roles of $a$ and $b$ (to get a matrix in $M_2(\R)$).

The following theorem is useful for our computation.
\begin{theorem}[\cite{maclachlan-reid}, Theorem 2.6.6]\label{theorem: criterion for P-adics} Let $K$ be a non-dyadic $\Padic$-adic field, with integers $R$ and maximal ideal $\Padic$. Let $\QA=\left(\frac{a,b}{K}\right)$, where $a,b\in R$.\begin{enumerate}
    \item If $a,b\notin\Padic$, then $\QA$ splits.
    \item If $a\notin\Padic$, $b\in\Padic\backslash\Padic^2$, then $\QA$ splits if and only if $a$ is a square mod $\Padic$.
    \item If $a,b\in\Padic\backslash \Padic^2$, then $\QA$ splits if and only if $-a^{-1}b$ is a square mod $\Padic$.
\end{enumerate}
\end{theorem}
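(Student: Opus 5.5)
The plan is to translate splitting into the isotropy of a ternary quadratic form, and then to settle each case by reducing modulo $\Padic$ and lifting with Hensel's lemma. Hensel's lemma applies because $K$ is non-dyadic, so $2\in R^*$.

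\textbf{Step 0 (the main obstacle): $\QA=\left(\frac{a,b}{K}\right)$ splits if and only if $Q(x,y,z)=ax^2+by^2-z^2$ has a nontrivial zero in $K^3$.}

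For the direction from a zero to splitting, suppose $Q(x,y,z)=0$ nontrivially. If $z=0$, then $-b/a=(x/y)^2$ is a square. In that case the pure quaternion $u=xI+yJ$ is nonzero and has $n(u)=-ax^2-by^2=0$. If $z\neq 0$, take $u=xI+yJ+z'IJ$ with a suitably rescaled $z'$. One can arrange this because $n(x_0+x_1I+x_2J+x_3IJ)=x_0^2-ax_1^2-bx_2^2+abx_3^2$. A cleaner route is to use $x_0+x_1I+x_2J$ directly: its norm is $x_0^2-ax_1^2-bx_2^2$, so the triple $(z,x,y)$ gives a nonzero element of norm $0$. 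Such an element satisfies $u\bar u=0$, so it is a zero divisor. Hence $\QA$ is not a division algebra, and $\QA\cong M_2(K)$ by Theorem \ref{theorem: dichotomy of quaternion algebra}.

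For the converse, if $\QA\cong M_2(K)$, pick a nonzero nilpotent pure quaternion; in $M_2(K)$ this is a nonzero trace-zero matrix with square $0$. Writing it as $x_1I+x_2J+x_3IJ$ gives $ax_1^2+bx_2^2-abx_3^2=0$. If $x_3\neq0$, then $a(x_1^2-bx_3^2)=-bx_2^2$. I would then use the standard manipulation $(x_1^2-bx_3^2)\cdot a = $ norm from $K(\sqrt b)$ to convert this into a zero of $Q$, via the identity expressing that $a$ times a norm from $K(\sqrt b)$ is again such a norm times a square. This bookkeeping is the delicate part, and I would state it cleanly as: $\QA$ splits $\iff$ $b\in N(K(\sqrt a))$ or $a\in K^{*2}$ $\iff$ $Q$ is isotropic.

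\textbf{Case (1): $a,b\notin\Padic$.} Over the residue field $\mathbb F_q$, with $q$ odd, the sets $\{\bar a x^2\}$ and $\{1-\bar b y^2\}$ each have $(q+1)/2$ elements. They therefore meet, which gives $\bar a\bar x^2+\bar b\bar y^2=1$. Hence $Q$ has a zero mod $\Padic$ with $\bar z=1$, so $\partial Q/\partial z=-2z$ is a unit. Hensel's lemma lifts this to a zero of $Q$ in $R^3$, and Step 0 shows that $\QA$ splits.

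\textbf{Case (2): $a\notin\Padic$, $b\in\Padic\setminus\Padic^2$.} Suppose first that $\bar a$ is a square. Hensel's lemma applied to $X^2-a$ gives $c\in K$ with $c^2=a$. Then $(I-c)(I+c)=0$ exhibits a zero divisor, so $\QA$ splits.

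Conversely, suppose $\QA$ splits. Take a nontrivial zero $(x,y,z)$ of $Q$, scaled so that it is primitive in $R^3$. Reducing mod $\Padic$ gives $\bar z^2=\bar a\bar x^2$. If $x\in\Padic$, then $z\in\Padic$, so $by^2=z^2-ax^2\in\Padic^2$. Since $v(b)$ has exponent exactly $1$, this forces $y\in\Padic$, which contradicts primitivity. Hence $x$ is a unit, and $\bar a=(\bar z/\bar x)^2$ is a square.

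\textbf{Case (3): $a,b\in\Padic\setminus\Padic^2$.} Replace $J$ by $J'=IJ$. Then $IJ'=-J'I$ and $J'^2=-ab$, so $\QA\cong\left(\frac{a,-ab}{K}\right)$. Since $-ab=a^2(-a^{-1}b)$, this gives $\QA\cong\left(\frac{a,-a^{-1}b}{K}\right)\cong\left(\frac{-a^{-1}b,a}{K}\right)$. Here $-a^{-1}b$ is a unit and $a\in\Padic\setminus\Padic^2$, so Case (2) applies: $\QA$ splits if and only if $-a^{-1}b$ is a square mod $\Padic$.
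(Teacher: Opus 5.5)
The paper gives no proof of this statement; it quotes it from Maclachlan--Reid, so there is no internal argument to compare yours against. Taken on its own terms, your proposal is a correct proof along standard lines. Splitting is equivalent to isotropy of $Q=\langle a,b,-1\rangle$. Case (1) follows from a residue-field zero lifted by Hensel's lemma. Case (2) uses Hensel in one direction and a valuation argument on a primitive zero in the other. Case (3) follows from the isomorphism $\left(\frac{a,b}{K}\right)\cong\left(\frac{-a^{-1}b,a}{K}\right)$. These steps are correct as written.

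\textbf{The converse half of Step 0.} This is the one unfinished spot: you call it ``delicate'' and then fall back on the norm criterion, but no bookkeeping is needed.

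\emph{From a nilpotent to a zero of $Q$.} Suppose $u=x_1I+x_2J+x_3IJ\neq 0$ satisfies $u^2=0$, i.e.\ $ax_1^2+bx_2^2-abx_3^2=0$. Multiply by $ab$ to get $a(bx_2)^2+b(ax_1)^2-(abx_3)^2=0$. Since $a,b\neq 0$, $(bx_2,ax_1,abx_3)$ is a nontrivial zero of $Q$, with no case split on $x_3$.

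\emph{Purity of $u$.} You also do not need to know in advance that $\QA\cong M_2(K)$ carries pure quaternions to trace-zero matrices. Pull back any nonzero $N\in M_2(K)$ with $N^2=0$ to get $u$. Then $u\notin K$, so $1,u$ are linearly independent. The identity $u^2-\mathrm{tr}(u)u+n(u)=0$ therefore forces $\mathrm{tr}(u)=n(u)=0$, so $u$ is pure.

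\emph{Forward half.} Delete your first attempt with $xI+yJ+z'IJ$. Its norm is $-ax^2-by^2+abz'^2$, which vanishes only if $abz'^2=z^2$, i.e.\ only when $ab$ is a square. The element $z+xI+yJ$ alone suffices, and it also covers the case $z=0$.

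\textbf{Comparison with the other standard route.} That route goes through the norm criterion you mention. If $a$ is a unit but not a square, then $\bar a$ is not a square by Hensel. So $K(\sqrt a)$ is the unramified quadratic extension, whose norm group is exactly the elements of even valuation. This gives (1) and (2) at once, and (3) reduces to (2) as in your argument. Your counting argument over $\mathbb F_q$ is an elementary stand-in for the surjectivity of the norm on units of the unramified extension. So both routes rest on the same fact; yours avoids any structure theory of local extensions beyond Hensel's lemma.
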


The following criterion can be used to detect if a quaternion algebra splits over a number field.
\begin{theorem}[\cite{maclachlan-reid}, Theorem 2.7.2] \label{theorem: criterion for splitting over number field k}
    Let $\QA$ be a quaternion algebra over a number field $k$. Then $\QA$ splits over $k$ if and only if $A\otimes_k k_v$ splits over $k_v$ for all places $v$.
\end{theorem}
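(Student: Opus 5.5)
The plan is to reduce splitting of $\QA=\left(\frac{a,b}{k}\right)$ to the isotropy of a ternary quadratic form, and then to apply the Hasse--Minkowski local--global principle for quadratic forms over number fields.

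The forward direction is immediate. If $\QA\cong M_2(k)$, then $\QA\otimes_k k_v\cong M_2(k)\otimes_k k_v\cong M_2(k_v)$ for every place $v$. So $\QA$ splits at every place, finite and infinite.

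For the converse, first prove the following characterization, valid over any field $K$ of characteristic $\neq 2$: $\left(\frac{a,b}{K}\right)$ splits if and only if the ternary form $Q(x,y,z)=ax^2+by^2-abz^2$ is isotropic over $K$.
\begin{itemize}
\item The form $Q$ arises as follows. For a pure quaternion $w=xI+yJ+zIJ$ we have $\bar w=-w$, so
$$n(w)=-w^2=-(ax^2+by^2-abz^2)=-Q(x,y,z).$$
\item If $Q$ has a nontrivial zero, then some $w\neq 0$ satisfies $w\bar w=0$. Then $w$ is a zero divisor, so $\QA$ is not a division algebra. By Theorem~\ref{theorem: dichotomy of quaternion algebra}, $\QA\cong M_2(K)$.
\item Conversely, if $\QA\cong M_2(K)$, the trace-zero matrix $\begin{pmatrix}0&1\\0&0\end{pmatrix}$ has determinant (reduced norm) $0$. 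This gives a nontrivial zero of $Q$.
\end{itemize}
Since $Q\otimes k_v$ is the form attached to $\QA\otimes_k k_v=\left(\frac{a,b}{k_v}\right)$, the local hypothesis says exactly that $Q$ is isotropic over every completion $k_v$.

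The remaining step is the Hasse--Minkowski theorem for ternary forms over $k$: a nondegenerate quadratic form over $k$ that is isotropic over every $k_v$ is isotropic over $k$. Once this is available, $Q$ is isotropic over $k$, and the characterization above gives that $\QA$ splits. This is the main obstacle, since it carries all the arithmetic content. I would take one of two routes.

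\begin{enumerate}
\item \emph{Quote Hasse--Minkowski.} Since Hasse--Minkowski is classical, I would cite it (for instance from O'Meara or Lam) rather than reprove it. This is also what Maclachlan--Reid effectively do.
\item \emph{Prove the ternary case directly.} Isotropy of $Q$ is equivalent to $a$ being a norm from $k(\sqrt b)$, since $x^2-ay^2-bz^2$ is isotropic if and only if $\left(\frac{a,b}{k}\right)$ splits. The statement then becomes the Hasse norm theorem for the cyclic quadratic extension $k(\sqrt b)/k$. That theorem follows from global class field theory, namely the norm index equality for the idele class group.
\end{enumerate}

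For the purposes of this paper, the first option is preferable. It keeps the argument short and isolates the only nontrivial input.
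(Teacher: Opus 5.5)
Your proposal is correct: the forward direction is immediate, the equivalence between splitting and isotropy of the norm form $\langle a,b,-ab\rangle$ on pure quaternions is verified correctly, and the Hasse--Minkowski theorem over $k$ gives the converse. The paper gives no proof of its own and simply cites Maclachlan--Reid, whose argument is this same reduction to the norm form combined with Hasse--Minkowski, so you follow the cited source.
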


We have the following classification theorem of quaternion algebras.

\begin{theorem}[\cite{maclachlan-reid}, Theorem 7.3.6]\label{theorem: classification of quaternion algebras}
    Let $\QA$ be a quaternion algebra over the number field $k$ and let $\text{Ram}(\QA)$ denote the set of places (the set of infinite places $\Ram_\infty(\QA)$, the set of finite places $\Ram_f(\QA)$) at which $\QA$ is ramified. Then the following hold: \begin{enumerate}
        \item $\text{Ram}(\QA)$ is finite of even cardinality.
        \item Let $\QA_1,\QA_2$ be quaternion algebras over $k$. Then $\QA_1\cong \QA_2$ if and only if $\text{Ram}(\QA_1)=\text{Ram}(\QA_2)$.
        \item Let $S$ be any finite set of places of $\Omega(k)\backslash\{\text{non-real places in }\Omega_\infty\}$ of even cardinality. Then there exists a quaternion algebra $\QA$ over $k$ such that $\text{Ram}(\QA)=S$. Here $\Omega(k)$ denotes the set of all places of $k$.
    \end{enumerate}
\end{theorem}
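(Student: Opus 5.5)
The statement is the classical Hasse--Brauer--Noether classification. I would prove it through local Hilbert symbols. For $\QA=\left(\frac{a,b}{k}\right)$ and a place $v$, set $(a,b)_v=1$ if $\QA_v$ splits and $-1$ otherwise. Equivalently, $(a,b)_v=1$ exactly when $a$ is a norm from $k_v(\sqrt b)$. Then $\Ram(\QA)=\{v:(a,b)_v=-1\}$, and the three parts of the statement become statements about these symbols.

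\emph{Part (1).} For finiteness, note that $a,b$ are units at all but finitely many prime ideals. Removing also the finitely many dyadic primes, Theorem \ref{theorem: criterion for P-adics}(1) shows that $\QA_\Padic$ splits at every remaining $\Padic$. The complex places never ramify, since $\left(\frac{a,b}{\C}\right)\cong M_2(\C)$. Hence $\Ram(\QA)$ is contained in a finite set consisting of the real places, the dyadic primes and the primes dividing $ab$.

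For even cardinality I would prove the Hilbert reciprocity law $\prod_v (a,b)_v=1$. Over $\Q$ this reduces, by bilinearity of the symbol, to generators $a,b\in\{-1,2,p\}$. One then computes the symbols explicitly using Theorem \ref{theorem: criterion for P-adics} at odd primes and a direct computation at $2$ and $\infty$. With these computations, the law becomes quadratic reciprocity together with its two supplements. For a general number field $k$ one needs the norm-residue reciprocity of class field theory, applied to the quadratic extension $k(\sqrt b)/k$. I expect this reciprocity step to be the main obstacle, and I would simply import it from class field theory.

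\emph{Part (2).} If $\QA_1\cong\QA_2$ then the local algebras are isomorphic everywhere, so the ramification sets agree. For the converse, associate to $\QA=\left(\frac{a,b}{k}\right)$ the ternary norm form $q_\QA=-ax^2-by^2+abz^2$ on the pure quaternions. Two quaternion algebras are isomorphic iff their ternary forms are similar. Over a local field a quaternion algebra is determined by whether it ramifies, so $\Ram(\QA_1)=\Ram(\QA_2)$ gives that $q_{\QA_1}$ and $q_{\QA_2}$ are similar over every $k_v$. The Hasse--Minkowski theorem then shows they are similar over $k$. To get this cleanly, use that the $4$-dimensional form $q_{\QA_1}\perp -c\,q_{\QA_2}$ is isotropic locally everywhere, hence globally. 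Alternatively, one can show that $\QA_1\otimes\QA_2$ is split at every place and apply the local--global principle of Theorem \ref{theorem: criterion for splitting over number field k} to the corresponding biquaternion algebra.

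\emph{Part (3).} Existence is a construction, using weak approximation together with Dirichlet's theorem on primes in ray classes (Chebotarev).
\begin{enumerate}
\item Choose $a\in k^*$ by weak approximation so that $a<0$ at the real places in $S$, $a>0$ at the other real places, and $k_v(\sqrt a)$ is a field at each finite $v\in S$.
\item Choose $b=\pi$, a prime element generating a prime $\Padic_0\notin S$, whose residue class modulo a suitable modulus prescribes $(a,b)_v$ for all $v\in S$ and forces $(a,b)_v=1$ at all other bad places.
\item For the good places $v\ne\Padic_0$ outside $S$, the symbol $(a,b)_v$ is $1$ by Theorem \ref{theorem: criterion for P-adics}(1).
\end{enumerate}
Reciprocity from Part (1), together with $|S|$ even, then forces $(a,b)_{\Padic_0}=1$. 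So $\Ram(\QA)=S$ exactly.
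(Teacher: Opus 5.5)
The paper gives no proof of this statement; it is quoted from Maclachlan--Reid. Your outline is the standard route to the Hasse--Brauer--Noether classification: Hilbert reciprocity for (1), the ternary norm form with Hasse--Minkowski for (2), and approximation, primes in ray classes and reciprocity for (3). Parts (1) and (2) are sound in substance, but Step 2 of Part (3) fails as written.

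\textbf{The gap in Part (3).} You require $b=\pi$ to generate a single prime $\Padic_0\notin S$, so $b$ is a unit at every finite $v\in S$. Step 1 only makes $k_v(\sqrt a)$ a field, and that allows $k_v(\sqrt a)/k_v$ to be unramified. For example, take $v$ non-dyadic and $a$ a unit that is a non-square mod $\Padic_v$. Then Theorem~\ref{theorem: criterion for P-adics}(1) gives $(a,\pi)_v=1$ for every unit $\pi$, whatever residue class you impose. More generally $(a,b)_v=(-1)^{v(b)}$ whenever $k_v(\sqrt a)/k_v$ is unramified, dyadic $v$ included, because all units are norms. So such a $v$ can never be made ramified by a prime element $\pi$.

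There are two standard repairs.
\begin{enumerate}
\item[(i)] Strengthen Step 1 by weak approximation so that $v(a)$ is odd at every finite $v\in S$.
\begin{itemize}
\item At non-dyadic $v\in S$, Theorem~\ref{theorem: criterion for P-adics}(2) (after swapping $a$ and $b$ and scaling by squares) gives $(a,\pi)_v=-1$ iff $\pi$ is a non-square mod $\Padic_v$.
\item At dyadic $v\in S$ the extension is now ramified, so its unit norm group has index $2$; the reason is that a uniformizer of $k_v$ is a norm. Hence some class modulo $\Padic_v^N$, with $1+\Padic_v^N\subseteq k_v^{*2}$, gives $(a,\pi)_v=-1$.
\item The ray-class form of Dirichlet's theorem then supplies a principal prime $\Padic_0$ whose generator lies in the prescribed classes and has the prescribed signs.
\end{itemize}
\item[(ii)] Drop the requirement that $b$ be prime.
\begin{itemize}
\item Pick $b_0\in k^*$ by weak approximation with $(a,b_0)_v=-1$ for $v\in S$ and $(a,b_0)_v=1$ at the other bad places.
\item Write $(b_0)=\mathfrak a\mathfrak b$, with $\mathfrak a$ supported on the bad primes and $\mathfrak b$ prime to them.
\item Let $\mathfrak m$ be high powers of the bad primes times all real places, and take a prime $\Padic_0$ in the ray class of $\mathfrak b$ modulo $\mathfrak m$.
\item Then $\Padic_0\mathfrak b^{-1}=(\alpha)$ with $\alpha$ a local square at every bad place, and $b=b_0\alpha$ satisfies $(b)=\mathfrak a\Padic_0$.
\end{itemize}
This is Serre's argument over $\Q$ and O'Meara's 71:19 in general.
\end{enumerate}
With either fix, the rest of your argument (good places contribute $1$, and reciprocity forces $(a,b)_{\Padic_0}=1$) goes through unchanged.

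\textbf{Smaller points in Part (2).}
\begin{itemize}
\item As written, $q_{\QA_1}\perp -c\,q_{\QA_2}$ is $6$-dimensional, and its isotropy would not give an isometry anyway. What you need is the standard corollary of Hasse--Minkowski that forms isometric over every $k_v$ are isometric over $k$. To prove it, pick $c$ represented by $q_{\QA_2}$ and apply Hasse--Minkowski to the $4$-dimensional form $q_{\QA_1}\perp\langle -c\rangle$. Then split off $\langle c\rangle$ and induct using Witt cancellation.
\item Phrase everything with isometry rather than similarity. The two coincide here because $\det\langle -a,-b,ab\rangle=a^2b^2$ is a square. However, local similarity with place-dependent scalars is not the input the local--global principle takes.
\item Your biquaternion alternative needs the Albert--Brauer--Hasse--Noether theorem for degree-$4$ algebras, plus Wedderburn to pass from Brauer equivalence to isomorphism. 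Theorem~\ref{theorem: criterion for splitting over number field k} as quoted only treats quaternion algebras, so it does not justify that route.
\end{itemize}
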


\subsection{Arithmetic Kleinian groups} \label{subsection: arith kleinian groups}

Let $R$ be a Dedekind domain whose field of fractions $K$ is either a number field or a $\Padic$-adic field. 

If $V$ is a vector space over $K$, an \emph{$R$-lattice} $L$ in $V$ is a finitely generated $R$-module contained in $V$. Furthermore, $L$ is a \emph{complete} $R$-lattice if $L\otimes_R K\cong V$.

Let $\QA$ be a quaternion algebra over $K$. An \emph{order} $\calO$ in $\QA$ is a complete $R$-lattice which is also a ring with $1$, and it is also the \emph{ring of integers} in $\QA$ that contains $R$ such that $K\calO=\QA$.

\begin{theorem}[\cite{maclachlan-reid}, Theorem 8.1.1]\label{theorem: embedding into R-vector space}
    If $\QA$ is ramified at $s_1$ real places, then    $$\QA\otimes_\Q\R\cong \QA\otimes_k(k\otimes_\Q\R)\cong\oplus_{s_1}\mathcal H\oplus_{(r_1-s_1)}M_2(\R)\oplus _{r_2}M_2(\C).$$
\end{theorem}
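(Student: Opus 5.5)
The plan is to reduce everything to the structure of $k\otimes_\Q\R$ and then use the classification of quaternion algebras over $\R$ and $\C$ recorded above. First, write $k=\Q(\alpha)$ with minimal polynomial $f\in\Q[x]$ of degree $n=r_1+2r_2$, so that $k\cong\Q[x]/(f)$. Then
$$k\otimes_\Q\R\cong \R[x]/(f).$$
Over $\R$, $f$ factors into $r_1$ distinct linear factors $x-\sigma_j(\alpha)$, one for each real embedding. It also has $r_2$ distinct irreducible quadratic factors $(x-\tau_l(\alpha))(x-\overline{\tau_l(\alpha)})$, one for each complex place. These factors are distinct because $f$ is separable. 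By the Chinese remainder theorem,
$$\R[x]/(f)\cong \R^{r_1}\oplus\C^{r_2}.$$
On $k$, this isomorphism is $x\mapsto(\sigma_1(x),\dots,\sigma_{r_1}(x),\tau_1(x),\dots,\tau_{r_2}(x))$, so each factor is $\R$ or $\C$ viewed as a $k$-algebra via the corresponding embedding.

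Second, use associativity of the tensor product,
$$\QA\otimes_\Q\R\cong\QA\otimes_k(k\otimes_\Q\R).$$
Since $\otimes$ distributes over finite direct sums, this gives
$$\QA\otimes_\Q\R\cong\bigoplus_{j=1}^{r_1}\QA\otimes_{k,\sigma_j}\R\ \oplus\ \bigoplus_{l=1}^{r_2}\QA\otimes_{k,\tau_l}\C.$$
By the base change rule stated earlier, $\left(\frac{a,b}{k}\right)\otimes_{k,\sigma}L\cong\left(\frac{\sigma(a),\sigma(b)}{L}\right)$. Each complex summand is therefore a quaternion algebra over $\C$, and hence isomorphic to $M_2(\C)$, as already noted.

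Third, identify the real summands. Each one is $\QA_{v_{\sigma_j}}=\left(\frac{\sigma_j(a),\sigma_j(b)}{\R}\right)$. By Theorem \ref{theorem: dichotomy of quaternion algebra} it is either a division algebra or $M_2(\R)$. The only division quaternion algebra over $\R$ is $\mathcal H$, namely the case $\sigma_j(a),\sigma_j(b)<0$. By the definition of ramification, $\QA_{v_{\sigma_j}}\cong\mathcal H$ exactly at the $s_1$ ramified real places, and $\QA_{v_{\sigma_j}}\cong M_2(\R)$ at the remaining $r_1-s_1$ real places. Collecting the summands gives the stated decomposition.

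The only step needing care is the first. One must check that the CRT decomposition of $\R[x]/(f)$ is compatible with the $k$-module structures, so that the factors really are the completions $k_v$ at the infinite places. Once this is checked, the remaining steps are formal.
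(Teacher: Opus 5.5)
Your argument is correct and is the standard proof of the cited result. The middle term $\QA\otimes_k(k\otimes_\Q\R)$ in the statement is exactly your second step, combined with $k\otimes_\Q\R\cong\R^{r_1}\oplus\C^{r_2}$ and the classification of quaternion algebras over $\R$ and $\C$. The paper omits the proof and only records the resulting map $\hat\phi(x\otimes r)=(r\hat\sigma_1(x),\ldots,r\hat\sigma_{s_1+1}(x))$, which is precisely your CRT identification composed with the base-change isomorphisms. The one cosmetic difference is at the complex place: the paper lists both $\sigma_{s_1+1}$ and $\bar\sigma_{s_1+1}$ and lands in the diagonal copy $\{(y,\bar y)\}$, which is equivalent to your choice of one embedding per conjugate pair.
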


Let $G=\oplus_{v\in\Omega_\infty\backslash\Ram_\infty(\QA)}\QA_v\cong\oplus M_2(k_v)$. Let $\iota:\QA\to G$ be such that $\iota=p\circ i$ where $i:\QA\hookrightarrow\QA\otimes_\Q\R$ for any $a\in\QA$, $i(a)=a\otimes 1$ and $p:\QA\otimes_\Q\R\to G$ is the projection. $\iota$ is an embedding.

\begin{theorem}[\cite{maclachlan-reid}, Theorem 8.1.2] \label{theorem: order}
    Let $\calO$ be an order in a quaternion algebra $\QA$ that splits at one or more infinite places of $k$, and let $\calO^1:=\{\alpha\in\calO:\ n(\alpha)=1\}$. Under the embedding $\iota$ described above, $\iota(\calO^1)$ is discrete and of finite covolume in $G^1=\sum SL(2,k_v)$. Furthermore, if $\QA$ is a quaternion division algebra, then $\iota(\calO^1)$ is cocompact. Also, if $G'=\sum SL(2,k_v)$ is a factor of $G^1$ with $1\neq G'\neq G^1$, then the projection of $\iota(\calO^1)$ in $G'$ is dense in $G'$.
\end{theorem}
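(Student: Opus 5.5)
The plan is to realise $\calO^1$ as an arithmetic subgroup of a semisimple $\Q$-group obtained by restriction of scalars, and then quote the standard theorems on arithmetic groups: Borel--Harish-Chandra for finite covolume, Godement's criterion for cocompactness, and Borel density for the density of projections.

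\textbf{Setting up and discreteness.} Let $n=[k:\Q]$. Since $\calO$ is a complete $R_k$-lattice in the $4$-dimensional $k$-space $\QA$, it is a free $\Z$-module of rank $4n$ spanning $\QA$ over $\Q$. By Theorem~\ref{theorem: embedding into R-vector space}, $i(\calO)$ is therefore a lattice in the real vector space $\QA\otimes_\Q\R$, so $i(\calO^1)$ is discrete in
\[
\QA\otimes_\Q\R\cong\oplus_{s_1}\mathcal H\oplus_{(r_1-s_1)}M_2(\R)\oplus_{r_2}M_2(\C).
\]
On norm-one elements this gives a discrete subgroup of
\[
\prod_{s_1}\mathcal H^1\times G^1,\qquad \mathcal H^1\cong SU(2).
\]
The projection $p$ onto $G^1$ has compact kernel $\prod\mathcal H^1$. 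A discrete subgroup stays discrete after projecting away a compact factor, because its intersection with a compact neighbourhood of the identity is finite. Hence $\iota(\calO^1)$ is discrete in $G^1$.

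\textbf{Finite covolume and cocompactness.} Let $\mathbf H=\mathrm{Res}_{k/\Q}\,SL_1(\QA)$, where $SL_1(\QA)$ is the norm-one group of $\QA$. Then $\mathbf H$ is a connected semisimple $\Q$-group. Its real points are $\mathbf H(\R)=\prod_{s_1}\mathcal H^1\times G^1$, and $\calO^1$ is an arithmetic subgroup of $\mathbf H(\Q)=\QA^1$, namely the stabiliser of the lattice $\calO$. Being semisimple, $\mathbf H$ has no nontrivial $\Q$-characters. So the Borel--Harish-Chandra theorem gives finite covolume of $i(\calO^1)$ in $\mathbf H(\R)$. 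Pushing forward along $p$ with compact kernel gives finite covolume of $\iota(\calO^1)$ in $G^1$; the hypothesis that $\QA$ splits at some infinite place only ensures $G^1\neq 1$.

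For cocompactness we use Godement's criterion: the quotient is compact if and only if $\mathbf H$ is $\Q$-anisotropic, equivalently $\mathbf H(\Q)$ contains no nontrivial unipotent elements. Suppose $u\in\QA^1$ is unipotent with $u\neq 1$. Then $x=u-1\neq 0$ satisfies $x^m=0$ for some $m$, so $x$ is a zero divisor in $\QA$. This is impossible when $\QA$ is a division algebra, so $\iota(\calO^1)$ is cocompact.

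\textbf{Density of projections.} Write $G^1=G'\times G''$ with $G'$ a proper nontrivial product of factors, and let $\Gamma=\iota(\calO^1)$. Let $S$ be the closure of $\mathrm{pr}_{G'}(\Gamma)$. The identity component $S^0$ is normalised by $\Gamma$. By Borel density, $\Gamma$ is Zariski dense in $G^1$, since it is a lattice and $G^1$ has no compact factors. Hence $S^0$ is normalised by $G'$, so the Lie algebra of $S^0$ is an ideal in $\bigoplus\mathfrak{sl}(2,k_v)$. It is therefore a sum of some of the simple factors.

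I would then rule out all options except $S^0=G'$ by Galois conjugation. The key inputs are that $\mathbf H$ is $\Q$-simple (the restriction of scalars of an absolutely simple $k$-group) and that $\mathrm{Gal}(\bar\Q/\Q)$ permutes the simple factors of $\mathbf H$ transitively. The crucial step is the case where $S$ is discrete, or more generally misses some factor of $G'$. In that case one shows that $\Gamma\cap G''$, or the kernel of projecting onto some factors, is a lattice in a proper product of factors. Its Zariski closure would then be a proper nontrivial normal subgroup of $\mathbf H$ defined over $\Q$, contradicting $\Q$-simplicity. The intermediate cases are handled the same way, factor by factor.

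This irreducibility step is the main obstacle: one must produce a genuine $\Q$-subgroup from a real-analytic closure. I would first try to cite the standard irreducibility of arithmetic lattices in $\Q$-simple groups, as in Margulis, rather than reprove it.
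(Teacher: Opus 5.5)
The paper gives no proof of this statement. It is quoted from Maclachlan--Reid and used as a black box, so there is no internal argument to compare with. Your argument is the standard derivation and is correct in substance:
\begin{itemize}
\item restriction of scalars to $\mathbf H=\mathrm{Res}_{k/\Q}SL_1(\QA)$, with $\mathbf H(\R)=\prod_{s_1}\mathcal H^1\times G^1$;
\item discreteness from $i(\calO)$ being a lattice in $\QA\otimes_\Q\R$, together with compactness of the discarded factor;
\item finite covolume from Borel--Harish-Chandra;
\item cocompactness from Godement's criterion. Your observation that a unipotent $u\neq 1$ in $\QA^1$ would make $u-1$ a nonzero nilpotent is the right way to see anisotropy.
\end{itemize}

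Your sketch of the density clause also closes, but two points should be made explicit.

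First, the Zariski closures must be taken in $\mathbf H$, not in $G^1$. So the lattice you produce should be $i(\calO^1)\cap(\prod_{s_1}\mathcal H^1\times N)$, where $N\subset G^1$ is the kernel of the projection to the factors on which the image of $\iota(\calO^1)$ is discrete. It is a lattice by the standard criterion: if $\Gamma N$ is closed, then $\Gamma\cap N$ is a lattice in $N$.

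Second, normality of its Zariski closure $\mathbf Z$ in $\mathbf H$ does not follow from Borel density in $G^1$ alone. The group $\mathbf H$ also has anisotropic factors at the ramified real places. You can fix this in either of two ways:
\begin{itemize}
\item quote Borel density for arithmetic subgroups of a $\Q$-simple group with $\mathbf H(\R)$ noncompact; or
\item note that $\mathbf Z^0$ is normalised by every element of $\QA^1=\mathbf H(\Q)$, since $\QA^1$ commensurates $\calO^1$, and that $\mathbf H(\Q)$ is Zariski dense in $\mathbf H$.
\end{itemize}
Then $\mathbf Z^0$ is a nontrivial proper connected normal $\Q$-subgroup, contradicting $\Q$-simplicity, as you say.

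For comparison, the density clause has a shorter proof adapted to quaternion algebras, using strong approximation for the simply connected group $SL_1(\QA)$.
\begin{itemize}
\item Let $S$ be the set of infinite places making up $G''$. Since $\QA$ splits there, $\prod_{v\in S}\QA^1_v=G''$ is noncompact, so $\QA^1$ is dense in the restricted product $\prod'_{v\notin S}\QA^1_v$.
\item Write $\calO_{\Padic}$ for the completion of $\calO$ at $\Padic$. Then $U=\prod_{\Padic}\calO^1_{\Padic}\times\prod_{s_1}\mathcal H^1\times G'$ is an open subgroup, so $\QA^1\cap U$ is dense in $U$.
\item By the local--global description of $\calO$, $\QA^1\cap U=\calO^1$.
\item Projecting, $\calO^1$ is dense in $\prod_{s_1}\mathcal H^1\times G'$, and in particular in $G'$.
\end{itemize}
This bypasses the identity-component analysis, the lattice criterion and $\Q$-simplicity, and it also gives density in the compact factors. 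Your route instead uses Borel density in place of strong approximation, and does not need simple connectedness.
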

This implies that if there exists $G'$ such that $1\neq G'\neq G^1$, then $\iota(\calO^1)$ is dense in $G'$. Therefore, if we want a lattice in $SL(2,\R)$ or $SL(2,\C)$ that acts on $\hyp^2$ or $\hyp^3$, respectively, $\QA$ must have only one infinite place at which it splits. $G^1$ is a locally compact group.

\begin{definition}[\cite{maclachlan-reid}, Definition 8.2.1]\label{def: arith Kleinian group}
    Let $k$ be a number field with exactly one complex place and let $\QA$ be a quaternion algebra over $k$ which is ramified at all real places. Let $\iota$ be a $k$-embedding of $\QA$ into $M_2(\C)$ and let $\calO$ be an $(R_k-)$order of $\QA$. Then a subgroup $\Gamma$ of $SL(2,\C)$ (or $PSL(2,\C)$) is an \emph{arithmetic Kleinian group} if it is commensurable with some such $\iota(\calO^1)$ or $P\iota(\calO^1)$. Hyperbolic $3$-manifolds and $3$-orbifolds, $\hyp/\Gamma$, will be referred to as \emph{arithmetic} when their covering groups $\Gamma$ are arithmetic Kleinian groups.
\end{definition}
Similarly, let $\QA$ be a quaternion algebra over a totally real field $k$ that is ramified at all real places except for one. Let $\iota$ be a $k$-embedding of $\QA$ into $M_2(\R)$ and $\calO$ an order in $\QA$. Then a subgroup of $SL(2,\R)$ is an \emph{arithmetic Fuchsian group} if it is commensurable with some $\iota(\calO^1)$ or $P\iota(\calO^1)$.

From Theorem \ref{theorem: order}, if we want to look for arithmetic Kleinian or Fuchsian groups, the quaternion algebra which we start with must be such that $\QA\otimes_\Q\R\cong \oplus_{s_1}\mathcal H\oplus M_2(k_v)$ where $k_v$ is either $\C$ or $\R$, respectively. Meanwhile, since we want a cocompact lattice, we need $\QA$ to be a division algebra. From Theorems \ref{theorem: dichotomy of quaternion algebra} and \ref{theorem: criterion for splitting over number field k}, we need $\QA$ to be ramified at some even number of places of $k$. We will show that these places of $k$ where $\QA$ is ramified have to be finite places in order for us to construct fibrewise Anosov flows.

\subsection{Arithmetic Kleinian groups acting on tori} \label{subsection: action on tori}

Now suppose that we have a quaternion algebra $\QA\hookrightarrow\QA\otimes_\Q\R\cong \oplus_{s_1}\mathcal H\oplus M_2(k_v)$, where $k_v$ is either $\C$ or $\R$, which gives rise to the arithmetic Kleinian or Fuchsian group we want. Then the groups given by the orders act on a higher dimensional real vector space preserving an integer lattice. The idea is ``restriction of scalars''.
We have omitted the proof of Theorem \ref{theorem: embedding into R-vector space}, but we describe the isomorphism here which is useful for us.

When $k$ is totally real, $\QA\otimes_\Q\R\cong \oplus_{s_1}\mathcal H\oplus M_2(\R)$.
Let $\sigma_l,\ l=1,\ldots, s_1,s_1+1$ denote the embeddings $k\hookrightarrow\R$, where the first $s_1$ are ramified places and $\sigma_{s_1+1}$ is the split real place. Let $\QA_l=\left(\frac{\sigma_l(a),\sigma_l(b)}{\R}\right)$, $l=1,\ldots,s_1+1$ and $1_l,I_l,J_l,I_lJ_l$ be its basis. Define ring homomorphisms $\hat\sigma_l:\QA\to \QA_l$, such that 
$$\hat\sigma_l(x_01+x_1I+x_2J+x_3IJ)=\sigma_l(x_0)1_l+\sigma_l(x_1)I_l+\sigma_l(x_2)J_l+\sigma_l(x_3)I_lJ_l.$$ 
When $k_v=\C$, $\QA\otimes_\Q\R\cong \oplus_{s_1}\mathcal H\oplus M_2(\C)$. We define $\hat\sigma_l,\ l=1,\ldots,s_1$ as in the above real case, but we let $\hat\sigma_{s_1+1}:\QA\to \QA_{s_i+1}\oplus \bar\QA_{s_i+1}$, where $ \QA_{s_i+1}=\left(\frac{\sigma_{s_1+1}(a),\sigma_{s_1+1}(b)}{\C}\right), \bar\QA_{s_i+1}=\left(\frac{\bar\sigma_{s_1+1}(a),\bar\sigma_{s_1+1}(b)}{\C}\right)$ be such that
\begin{align*}
    & \hat\sigma_{s_1+1}(x_01+x_1I+x_2J+x_3IJ) \\
    =\; & \sigma_{s_1+1}(x_0)1_{s_1+1}+\sigma_{s_1+1}(x_1)I_{s_1+1}+\sigma_{s_1+1}(x_2)J_{s_1+1}+\sigma_{s_1+1}(x_3)I_{s_1+1}J_{s_1+1} \\
    & \qquad +\bar\sigma_{s_1+1}(x_0)1'_{s_1+1}+\bar\sigma_{s_1+1}(x_1)I'_{s_1+1}+\bar\sigma_{s_1+1}(x_2)J'_{s_1+1}+\bar\sigma_{s_1+1}(x_3)I'_{s_1+1}J'_{s_1+1},
\end{align*}
where $1_{s_1+1},I_{s_1+1},J_{s_1+1},I_{s_1+1}J_{s_1+1}$ and $1_{s_1+1}',I_{s_1+1}',J_{s_1+1}',I_{s_1+1}'J_{s_1+1}'$ denote the bases of $\QA_{s_1+1}$ and $\bar\QA_{s_1+1}$, respectively. Note that $\hat\sigma_l, l=1,\ldots,s_1+1$ are all $\Q$-linear. 

Next, we let $\hat\phi:\QA\otimes_\Q\R\to \oplus_{s_1+1}\QA_l$ (if $k$ is totally real) or $\hat\phi:\QA\otimes_\Q\R\to\oplus_{s_1}\QA_l\oplus(\QA_{s_i+1}\oplus\bar\QA_{s_i+1})$ (if $k$ has a complex place) be such that $\hat\phi(x\otimes r)=(r\hat\sigma_1(x),\ldots,r\hat\sigma_{s_1+1}(x))$ for $x\in\QA$ and $r\in\R$.
Let the multiplication on $\QA\otimes_\Q\R$ be $(x\otimes r_1)(y\otimes r_2)=(xy)\otimes (r_1r_2)$ and the multiplication on $\oplus_{s_1+1}\QA_l$, and $\oplus_{s_1}\QA_l\oplus(\QA_{s_i+1}\oplus\bar\QA_{s_i+1})$ be coordinate-wise.
One can check that $\hat\phi$ preserves the multiplication and is $\R$-linear. Recall that in the previous section, we had either an isomorphism $\QA\otimes_{k,\sigma} k_v\cong M_2(k_v)$ if $\sigma$ is a real or complex place at which $\QA$ splits (see Equation (\ref{equation: splitting case})), or a map that maps a Hamilton's quaternion algebra onto a subalgebra of $M_2(\C)$ (see Equation (\ref{equation: hamilton's quaternion})). Composing these maps with $\hat\phi$, this is our isomorphism $\phi:\QA\otimes_\Q\R\to \oplus_{s_1}\mathcal H\oplus M_2(k_v)$. In the last summand, if $k$ has a complex place, we actually get $\Delta(M_2(\C))\cong M_2(\C)$, where the isomorphism is given by the diagonal map $x\mapsto (x,\bar x)$ for any $x\in M_2(\C)$. In either case, $\oplus_{s_1}\mathcal H\oplus M_2(k_v)$ is a real vector space of dimension $4n$, where $n$ is the degree of $k$ over $\Q$.

Let us denote the projection to each summand $\phi_l:=p_l\circ\phi\circ i:\QA\to \mathcal H$ or $M_2(k_v)$, $l=1,\ldots, s_1+1$.
If $x=x_01+x_1I+x_2J+x_3IJ\in \left(\frac{a,b}{k}\right)$, $n(x)=x\bar x=x_0^2-ax_1^2-bx_2^2-abx_3^2$.  From Equations (\ref{equation: hamilton's quaternion}) and (\ref{equation: splitting case}), and the fact that all such isomorphisms are conjugate to each other, we can check that $\sigma_l(n(x))=\det(\phi_l(x))$.
Also note that $\phi$ and $\phi_l$ are all $\R$-linear.

We want to check that $\phi\circ i(\calO)$ is a complete $\Z$-lattice in $\oplus_{s_1}\mathcal H\oplus M_2(k_v)$ of dimension $4n$.
By definition, $\calO$ is an $R_k$-module of $\QA$ where $R_k$ is the ring of integers of $k$ and has a $\Z$-basis of dimension $n$, denoted by $\{\alpha_1,\ldots,\alpha_n\}$.
Then $\calO$ has the $\Z$-basis $\alpha_l1, \alpha_lI,\alpha_lJ, \alpha_lIJ$ of dimension $4n$.
But for $l,m=1,\ldots,n$, we have
$$\phi_l(\alpha_m1)=\sigma_l(\alpha_m)\phi_l(1),\  \phi_l(\alpha_mI)=\sigma_l(\alpha_m)\phi_l(I),$$
$$\phi_l(\alpha_mJ)=\sigma_l(\alpha_m)\phi_l(J),\  \phi_l(\alpha_mIJ)=\sigma_l(\alpha_m)\phi_l(IJ),$$
and $\phi$ is a vector space isomorphism and is in particular $\Z$-linear. For instance, vectors of the form
$$\phi(\alpha_m 1)=(\sigma_1(\alpha_m)\phi_1(1),\sigma_2(\alpha_m)\phi_2(1),\ldots,\sigma_n(\alpha_m)\phi_n(1))$$
(similarly for $I,J,IJ$) indeed form a $4n$-dimensional $\Z$-basis of $\oplus_{s_1}\mathcal H\oplus M_2(k_v)$.

Now, $\QA$ acts on $\QA\otimes_\Q\R$ by left multiplication by elements of $\QA\otimes_\Q 1$, such that $x.(y\otimes r)=(xy)\otimes r$ for any $x,y\in\QA$ and $r\in\R$. This induces the action of $\phi(\QA\otimes_\Q 1)$ on $\phi(\QA\otimes_\Q\R)$, because $\phi$ preserves the multiplication. This action restricts to $\calO^1$ and $\phi\circ i(\calO^1)$ preserves $\phi\circ i(\calO)$ simply because $\calO$ is a ring.
We can think of $\oplus_{s_1}\mathcal H\oplus M_2(k_v)$ as consisting of block diagonal matrices, and the $\phi\circ i(\calO^1)$ action on $\oplus_{s_1}\mathcal H\oplus M_2(k_v)$ preserves each block (as it is just matrix multiplication).

Note that for any matrix $\gamma =\begin{pmatrix}
    x & y \\ z & w
\end{pmatrix}\in M_2(\R)$, if we consider it to act on the $4$-dimensional $\R$-vector space spanned by the basis $\begin{pmatrix}
    1 & 0 \\ 0 & 0
\end{pmatrix}, \begin{pmatrix}
    0 & 0 \\ 1 & 0
\end{pmatrix}, \begin{pmatrix}
    0 & 1 \\ 0 & 0
\end{pmatrix}, \begin{pmatrix}
    0 & 0 \\ 0 & 1
\end{pmatrix}$, the action corresponds to the linear operator
$$L_\gamma:=\begin{pmatrix}
    x & y & 0 & 0 \\
    z & w & 0 & 0 \\
    0 & 0 & x & y \\
    0 & 0 & z & w
\end{pmatrix}\in M_4(\R),$$
so $\det(L_\gamma)=(\det(\gamma))^2$. Similarly, for $\gamma\in M_2(\C)$, it acts as a linear operator $L_{\gamma,\C}\in M_4(\C)$, which has determinant $\det(L_{\gamma,\C})=\det(\gamma)^2 \in\C$.
On the other hand, we can write $L_{\gamma,\C}=L_1+iL_2$ for some $L_1, L_2\in M_4(\R)$, and let $u+iv\in\C^4\cong\R^4\oplus i\R^4$ where $u,v\in\R^4$.
Then $(L_1+ iL_2)(u+ iv)=(L_1u-L_2v)+ i(L_1v+L_2u)$. If we consider $L_{\gamma,\C}$ acting on $\R^4\oplus i\R^4$, it can also be written as a real matrix $L_{\gamma,\R}=\begin{pmatrix}
    L_1 & -L_2 \\ L_2 & L_1
\end{pmatrix}\in M_8(\R)$ and it is conjugate to $L_\gamma:=\begin{pmatrix}
    L_1+ iL_2 & 0 \\ 0 & L_1-  i L_2
\end{pmatrix}$ in $M_8(\C)$ with determinant $|\det(L_{\gamma,\C})|^2$.
So $\det(L_\gamma)=\det(L_{\gamma,\R})=|\det(\gamma)|^4$.

For the Hamilton's quaternion factors, $x\in\mathcal H_l, l=1,\ldots,s_1$, it is easier to compute directly in the basis $1,I,J,IJ$, and we also have $\det(L_{\phi_l(x)})=\sigma_l(n(x))^2\in\R$.
Therefore, for any $x\in\calO$, if we let $\mathcal L_x\in M_{4n}(\R)$ denote the linear operator of $x$ acting on $\oplus_{s_1}\mathcal H\oplus M_2(k_v)$, we have $$\det(\mathcal L_x)=\prod_{l=1}^{s_1+1} \det(L_{\phi_l(x)})= |\sigma_{s_1+1}(n(x))|^4\prod_{l=1}^{s_1}\sigma_l(n(x))^2,\ \text{for the Kleinian group case},$$
and $$\det(\mathcal L_x)=\sigma_{s_1+1}(n(x))^2\prod_{l=1}^{s_1}\sigma_l(n(x))^2,\ \text{for the Fuchsian group case}.$$
Restricting to $x\in\calO^1$, $\mathcal L_x\in SL(4n,\Z)$.

\begin{definition}
    We denote the action of $\calO^1$ on $\T^{4n}$ by $\rho:\calO^1\to SL(4n,\Z),\ x\mapsto \mathcal L_x$.
\end{definition}

The above computation also gives us the characteristic polynomial of $\mathcal L_x$,
\begin{align*}
    \chi_{\mathcal L_x}(\lambda) =\det(\mathcal L_x-\lambda \Id) =& \prod_{l=1}^{s_1+1}\det(L_{\phi_l(x)}-\lambda \Id) \\
    = &|\det(\phi_{s_1+1}(x)-\lambda \Id)|^4\prod_{l=1}^{s_1}(\det(\phi_l(x)-\lambda \Id))^2
\end{align*}
for the Kleinian case, or
$$\chi_{\mathcal L_x}(\lambda)=(\det(\phi_{s_1+1}(x)-\lambda \Id))^2\prod_{l=1}^{s_1}(\det(\phi_l(x)-\lambda \Id))^2$$
for the Fuchsian case.

Therefore, $\mathcal L_x$ has the same eigenvalues as $x$.

\begin{lemma} \label{lemma: SU factor}
    If $\QA$ is ramified at a real place, the action has an eigenvalue on the unit circle.
\end{lemma}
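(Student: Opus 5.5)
The plan is to read off an eigenvalue from a Hamilton quaternion block of $\mathcal L_x$. We may assume $\QA$ is ramified at the real place $\sigma_1$. By the block decomposition, $\mathcal L_x$ is block diagonal, so its eigenvalues are the union of the eigenvalues of the blocks $L_{\phi_l(x)}$. By the factorization of $\chi_{\mathcal L_x}$ computed above, the eigenvalues of $L_{\phi_1(x)}$ are exactly the roots of $\det(\phi_1(x)-\lambda\Id)$, each with multiplicity two. It therefore suffices to show that for every $x\in\calO^1$ the matrix $\phi_1(x)$ has both eigenvalues on the unit circle.

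Since $\QA$ is ramified at $\sigma_1$, we have $\QA_1\cong\mathcal H$, and by Equation (\ref{equation: hamilton's quaternion}) the image $\phi_1(x)$ has the form $\begin{pmatrix}\alpha&\beta\\-\bar\beta&\bar\alpha\end{pmatrix}$. Its determinant is $|\alpha|^2+|\beta|^2=\sigma_1(n(x))$, using $\sigma_l(n(x))=\det(\phi_l(x))$. For $x\in\calO^1$ this equals $\sigma_1(1)=1$, so $\phi_1(x)\in\mathcal H^1\cong SU(2)$.

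An element of $SU(2)$ is unitary, so its eigenvalues have modulus $1$. Concretely, the characteristic polynomial is $\lambda^2-2\,\mathrm{Re}(\alpha)\lambda+1$ with $|\mathrm{Re}\,\alpha|\le 1$, so its roots are $e^{\pm i\theta}$. Hence $\mathcal L_x$, and so $\rho(x)$, has an eigenvalue on the unit circle. In particular no $\rho(x)$ is Anosov, which is the point of the lemma.

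There is no real obstacle here. The only care needed is the bookkeeping that the eigenvalues of the $4\times4$ real block $L_{\phi_1(x)}$ coincide with those of the $2\times2$ complex matrix $\phi_1(x)$. This holds because, in the basis $1,I,J,IJ$, left multiplication by a quaternion is unitarily equivalent to two copies of $\phi_1(x)$, and that is exactly what the displayed factorization of $\chi_{\mathcal L_x}$ records.
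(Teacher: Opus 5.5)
Your proposal is correct and follows essentially the same route as the paper: restrict to a Hamilton quaternion block, observe that norm-one elements land in $\mathcal H^1\cong SU(2)$, and read off from $\lambda^2-2\,\mathrm{Re}(\alpha)\lambda+1$ that the eigenvalues have modulus one. Your explicit check that $\det\phi_1(x)=\sigma_1(n(x))=1$ and the unitarity remark spell out points the paper leaves implicit. Unitarity also covers the degenerate case $x=\pm\Id$ uniformly, which the paper treats separately.
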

\begin{proof}
    We can compute explicitly the eigenvalues for one of the Hamilton's quaternion factors. Recall the isomorphism (\ref{equation: hamilton's quaternion})
    $$\mathcal H\cong \left\{\begin{pmatrix}
        \alpha & \beta \\ -\bar\beta & \bar\alpha
    \end{pmatrix}\big|\ \alpha,\beta\in\C,\ |\alpha|^2+|\beta|^2=1 \right\}.$$
    Then for $x\in\mathcal H$,
    $\chi_x(\lambda)=\lambda^2-(\alpha+\bar\alpha)\lambda+1$.
    Solving for $\lambda$, we have $\lambda=\frac{2\text{Re}(\alpha)\pm\sqrt{4\text{Re}(\alpha)^2-4}}{2}$.
    Since $|\text{Re}(\alpha)|\leq |\alpha|\leq 1$, $\sqrt{4\text{Re}(\alpha)^2-4}$ is purely imaginary or $0$. In the latter case, $x=\pm \Id$.
    We compute for the first case,
    $$\lambda\bar\lambda=\frac{4\text{Re}(\alpha)^2-4\text{Re}(\alpha)^2+4}{4}=1.$$
    Therefore $|\lambda|=1$. In either case, $x$ has an eigenvalue on the unit circle.
\end{proof}

\begin{proof}[Proof of Claim \ref{claim: existence of representation}]
    The quaternion algebra that we want must have no real places by Lemma \ref{lemma: SU factor} and exactly one complex place, and to generate a cocompact arithmetic Kleinian group, it must be ramified at an even number of finite places by Theorem \ref{theorem: classification of quaternion algebras} and Theorem \ref{theorem: order}. 
    Therefore the field must be of the form $\Q(\sqrt{-d})$ where $d>0$ is a real number.
    Such examples exist also by the Classification Theorem \ref{theorem: classification of quaternion algebras}. Namely, take the ring of integers of $\Q(\sqrt{-d})$. It has infinitely many nonzero prime ideals. For any even number of prime ideals, there exists a quaternion algebra that is ramified at these finite places, which gives rise to an arithmetic Kleinian group that we want. 
    By Selberg's lemma \cite[Theorem 1.3.5]{maclachlan-reid}, any arithmetic Kleinian group has a finite index subgroup that is torsion free. 
    Then this subgroup is the fundamental group of a closed hyperbolic 3-manifold, and hence every element as an element of $SL(2,\C)$ is loxodromic (see, for example, \cite{parker}). Therefore the action on the torus has no eigenvalues on the unit circle either.
\end{proof}

We would like to emphasize that the existence and abundance of such examples are guaranteed by the Classification Theorem \ref{theorem: classification of quaternion algebras}. But explicit examples should not be hard to find and here we can give one.

\begin{example} \label{example: quaternion algebra}
    Let $\QA= \left(\frac{i+1,3}{\Q(i)}\right)$.

    The minimal polynomial of $i$ is $x^2+1=0$. It has no real roots and a pair of complex roots.

    We want to check that $\text{Ram}(\QA)\neq \emptyset$, that is, there exists a prime ideal $\Padic$ at which $\QA$ does not split.
    Recall our discussion of prime ideals of $\Q(i)$ in Example \ref{example: prime ideals}.
    Consider $\Padic=(3)$ and let $a=1+i$, $b=3$.
    Then we can use Theorem \ref{theorem: criterion for P-adics} (2), because we have $1+i\notin (3)$, $3\notin \Padic^2=(9)$ and the isomorphism (\ref{equation: residue field}). We show that $1+i$ is not a square mod $(3)$. But explicitly, $$\Z[i]/(3)=\{a+bi:\ a,b\in\Z_3\}=\{0,1,2,i,2i,1+i,1+2i,2+i,2+2i\}.$$
    We can check every square: $2^2= 1$, $i^2=-1=2$, $(2i)^2=-4=2$, $(1+i)^2=2i$, $(1+2i)^2=i$, $(2+i)^2=i$, $(2+2i)^2=2i$. Therefore, $1+i$ is not a square and $\QA$ does not split. 
\end{example}

We also want to know the moduli of the eigenvalues, in order to obtain uniform hyperbolicity later for our flows.

\begin{prop}\label{prop: translation length}
    Suppose an arithmetic Kleinian group $\iota(\calO^1)$ is the fundamental group (after quotienting by $\{\pm I\}$) of a closed hyperbolic 3-manifold. Then there exists $\lambda, |\lambda|\neq 1$ such that for any $\gamma\in\iota(\calO^1)$, the eigenvalues of $\rho(\gamma)$ are $\lambda^{\ell(\gamma)+i\theta(\gamma)},\bar\lambda^{\ell(\gamma)+i\theta(\gamma)},\lambda^{-(\ell(\gamma)+i\theta(\gamma))},\bar\lambda^{-(\ell(\gamma)+i\theta(\gamma))}$, where $\ell(\gamma)$ denotes the translation length of $\gamma$ and $\theta(\gamma)$ denotes the rotation around the axis of $\gamma$ in $\hyp^3$.
\end{prop}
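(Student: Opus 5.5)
Since the paper has already shown that $\mathcal L_x$ has the same eigenvalues as $x$ (with multiplicity), the plan is to compute the eigenvalues of $\iota(\gamma)\in SL(2,\C)$ in terms of its hyperbolic geometry. By Claim \ref{claim: existence of representation}, $k=\Q(\sqrt{-d})$ has no real places, so $s_1=0$. The characteristic polynomial formula in the Kleinian case then reduces to
$$\chi_{\mathcal L_x}(\lambda)=|\det(\phi_{1}(x)-\lambda\Id)|^4 .$$
Expanding $|p(\lambda)|^2=p(\lambda)\bar p(\lambda)$, where $\bar p$ has conjugated coefficients, the eigenvalues of $\rho(\gamma)$ are exactly the eigenvalues of $\phi_1(\gamma)$ together with their complex conjugates, each appearing with multiplicity two.

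Next I will use the standard fact about loxodromic elements. Since the group is torsion free modulo $\pm I$ and cocompact, every nontrivial $\gamma$ is loxodromic, as noted in the proof of the Claim. So $\phi_1(\gamma)$ is conjugate in $SL(2,\C)$ to $\pm\operatorname{diag}(\mu,\mu^{-1})$ with $|\mu|>1$. Its complex translation length $\ell(\gamma)+i\theta(\gamma)$ satisfies $\mu^2=e^{\ell(\gamma)+i\theta(\gamma)}$. Concretely, the Möbius map $z\mapsto\mu^2 z$ translates the axis $\{0\}\times\R_{>0}\subset\hyp^3$ by $\log|\mu|^2$ and rotates around it by $\arg\mu^2$. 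Hence $\mu=\pm e^{(\ell+i\theta)/2}$.

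Setting $\lambda=e^{1/2}$, the eigenvalues become $\pm\lambda^{\ell+i\theta}$, $\pm\lambda^{-(\ell+i\theta)}$ and their conjugates $\pm\bar\lambda^{\ell-i\theta}$, $\pm\bar\lambda^{-(\ell-i\theta)}$. The sign ambiguity can be absorbed by adjusting $\theta$ by $2\pi$, which is consistent with passing to $PSL(2,\C)$. This gives the claimed form, with $|\lambda|\neq 1$ and $\lambda$ independent of $\gamma$. If one prefers, the statement can be read so that the conjugate entries are literally the complex conjugates $\overline{\lambda^{\ell+i\theta}}=\lambda^{\ell-i\theta}$.

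The main obstacle is bookkeeping rather than mathematics. I need to fix normalizations: translation length versus $\log|\mu|^2$, the sign $\pm$ from $SL$ versus $PSL$, and the branch of the complex power. I would handle these by working with an explicit diagonalized representative and stating the convention for $\lambda^{z}:=e^{z\log\lambda}$ with real $\lambda=\sqrt e$. The one substantive point to double-check is that the conjugate factor $\bar\QA_{1}$ in $\hat\phi$ really contributes $\overline{\phi_1(x)}$. This follows because $\bar\sigma$ is the conjugate embedding, so conjugation commutes with taking eigenvalues, and the moduli of all eigenvalues are $e^{\pm\ell(\gamma)/2}$, which is what is needed later for uniform hyperbolicity.
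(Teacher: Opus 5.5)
Your proposal is correct and follows essentially the same route as the paper: you conjugate the loxodromic element $\gamma$ to $\mathrm{diag}(e^{(\ell+i\theta)/2},e^{-(\ell+i\theta)/2})$, read off its eigenvalues, and observe that $\rho(\gamma)$ also has their complex conjugates, with each eigenvalue appearing twice. The differences are cosmetic: the paper takes $\lambda=e^{-1/2}\in(0,1)$, which it relies on later so that $\lambda^{\ell(\gamma)+i\theta(\gamma)}$ labels the contracting directions, whereas you take $e^{1/2}$; and your handling of the $\pm$ sign and of reading $\bar\lambda^{\ell+i\theta}$ as a genuine complex conjugate is more careful than the paper's.
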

\begin{proof}
    We already know that every element of $\iota(\calO^1)< SL(2,\C)$ is loxodromic from the previous proof of Claim \ref{claim: existence of representation}.
    We take the standard axis $(0,\infty)$ in $\hyp^3$. Without loss of generality, we can assume that it is also the axis $A_{\gamma_0}$ for some class $\gamma_0\in \pi_1M\cong \iota(\calO^1)<SL(2,\C)$. 
    Then $\gamma_0$ is conjugate to $\begin{pmatrix}
        e^{(\ell(\gamma_0)+i\theta(\gamma_0))/2} & \\ & e^{-(\ell(\gamma_0)+i\theta(\gamma_0))/2}
    \end{pmatrix}$, as the unit speed geodesic flow acts on this axis by the M\"{o}bius transformation of $\begin{pmatrix}
        e^{t/2} & \\ & e^{-t/2}
    \end{pmatrix}$.    
    For any $\gamma\in \iota(\calO^1)$, there exists an isometry $h\in PSL(2,\C)$ such that $hA_{\gamma_0}=A_{\gamma}$ and $A_{h^\inv\gamma h} = A_{\gamma_0}$. Thus, $h^\inv\gamma h$ also corresponds to a translation of distance $\ell(\gamma)$ along $A_{\gamma_0}$ and some rotation around $A_{\gamma_0}$. If we let $\lambda=e^{-1/2}\in (0,1)$, the eigenvalues of $\gamma\in SL(2,\C)$ are therefore $\lambda^{\ell(\gamma)+i\theta(\gamma)},\lambda^{-(\ell(\gamma)+i\theta(\gamma))}$. Because of the definition of $\rho$, the conjugates are also the eigenvalues of $\rho(\gamma)$, and each of the eigenvalues is repeated twice.
\end{proof}

\begin{remark}
    If we do not mind reducible representations, we can take representations of dimension $4n$ in the Fuchsian case and $8n$ in the Kleinian case.
    
    Tomter's \cite{tomter} original work gives algebraic examples of fibrewise Anosov flows over geodesic flows on the unit tangent bundles of hyperbolic surfaces arising in this way, and the total space has dimension $3+4n$. Now we can see that a similar construction also gives us algebraic examples of fibrewise Anosov flows in dimensions $5+8n$.
\end{remark}

\bigskip

\section{Proof of Theorem \ref{theorem: main theorem}} \label{section: proof of the main theorem}

We first fix some notation. Suppose $M$ is a closed hyperbolic 3-manifold and $\widetilde M$ is its universal cover. Let $\Gamma$ denote its fundamental group that satisfies (1) of the assumptions in Theorem \ref{theorem: main theorem}. 
We can construct fibrewise flows using the representation obtained in Section \ref{section: tomter's construction}. Let us denote this representation by $\rho:\Gamma\to SL(d,\Z)$.
Let $\varphi^t:M'\to M'$ be the Anosov flow on a flat $F$-bundle $M'$ over $M$, which lifts equivariantly to $\tilde\varphi^t:\widetilde M\times F\to \widetilde M\times F$. Let $\tilde\Phi^t: (\widetilde M\times F)\times\T^d\to (\widetilde M\times F)\times\T^d$ be such that $$\tilde\Phi^t(x,y)=(\tilde\varphi^t(x),y),$$
and $N:=(\widetilde M\times F)\times\T^d/\Gamma$ where $\gamma\in\Gamma$ acts by $(x,y).\gamma=(x.\gamma,\rho(\gamma)^\inv y)$.
The flow $\tilde\Phi^t$ is automatically equivariant, since it is identity in the fibres. It projects to a flow $\Phi^t: N\to N$. 
We denote the projection as $\pi_0:\widetilde M\times F\to \widetilde M$.

\begin{lemma}
    Assume that a lifted curve $\ell$ from $M'$ is a $k$ quasi-geodesic in $\widetilde M\times F$ with respect to the metric on $\widetilde M$. Then $\pi_0(\ell)$ is a $k'$ quasi-geodesic in $\widetilde M$.
\end{lemma}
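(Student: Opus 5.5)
We need to show $\pi_0(\ell)$, parametrized by its own arclength in $\widetilde M$, is a quasi-geodesic. The plan is to compare three metrics along the curve: the length metric $d_\ell$ on $\ell$ induced from $M'$, the length metric $d_{\pi_0(\ell)}$ on the projected curve induced from $\widetilde M$, and the ambient distance $d_{\widetilde M}$. The hypothesis already gives a quasi-isometric comparison between $d_\ell(p,q)$ and $d_{\widetilde M}(\pi_0(p),\pi_0(q))$. So it suffices to show that $d_{\pi_0(\ell)}$ and $d_\ell$ are comparable, up to multiplicative and additive constants, along the curve.

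\textbf{Upper bound.} We equip $M'$ (and hence $\widetilde M\times F$) with a metric for which $\pi_0$ is Lipschitz. For instance, take a metric on the flat bundle whose horizontal part is the hyperbolic metric of $\widetilde M$. If the metric on $M'$ is arbitrary, compactness of $M'$ and $\Gamma$-equivariance of $d\pi_0$ show that $\|d\pi_0\|$ is bounded by some constant $c$. Hence the $\widetilde M$-length of any subarc of $\pi_0(\ell)$ is at most $c$ times the length of the corresponding subarc of $\ell$, that is, $d_{\pi_0(\ell)}(\pi_0 p,\pi_0 q)\le c\, d_\ell(p,q)$. Combined with the left inequality of the hypothesis,
$$d_{\widetilde M}(\pi_0 p,\pi_0 q)\le d_{\pi_0(\ell)}(\pi_0p,\pi_0q)\le c\,d_\ell(p,q)\le ck\, d_{\widetilde M}(\pi_0 p,\pi_0 q)+ck^2 .$$
This bounds the intrinsic length of the projected curve in terms of the ambient distance between its endpoints.

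\textbf{Lower bound.} The trivial inequality $d_{\widetilde M}\le d_{\pi_0(\ell)}$ gives the other direction. Together these show that the identity map from $(\pi_0(\ell),d_{\pi_0(\ell)})$ to $(\widetilde M,d_{\widetilde M})$ is a $k'$ quasi-isometric embedding, with $k'=\max\{ck, ck^2, 1\}$.

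\textbf{Main obstacle.} The projected curve must be a legitimately parametrized curve, and we must relate it to $\R$. Points of $\ell$ may project to the same point; for example, $\pi_0(\ell)$ could be non-injective or have vanishing speed where the flow is tangent to the fibres. The hypothesis prevents pathology at large scale, since $d_\ell\le k\,d_{\widetilde M}+k^2$ means projected points can coincide only for flow-time separation at most $k^2$. Still, we should treat $\pi_0(\ell)$ as a parametrized path with the pulled-back pseudo-length parameter $s(t)=\mathrm{length}_{\widetilde M}(\pi_0(\ell|_{[0,t]}))$, which is monotone and Lipschitz in $t$. The displayed inequalities show $s$ is a quasi-isometry between $(\ell,d_\ell)$ and $\R$. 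Composing gives a quasi-isometric embedding of $\R$ into $\widetilde M$ with image $\pi_0(\ell)$.

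If one prefers to avoid reparametrization issues entirely, the cleanest route is to simply parametrize $\pi_0(\ell)$ by the flow-time of $\ell$. This is how the lemma will be used later. In that case the hypothesis directly says $t\mapsto\pi_0(\tilde\varphi^t(x))$ is a quasi-isometric embedding, using that $d_\ell$ is comparable to $|t|$ because the flow has speed bounded above and below by compactness.
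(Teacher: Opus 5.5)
Your proposal is correct and follows the paper's proof: you bound the $\widetilde M$-length of the projected arc by a constant times $d_\ell$, combine this with the left-hand inequality of the hypothesis, and use the trivial bound $d_{\widetilde M}\le d_{\pi_0(\ell)}$. The only differences are that the paper gets the Lipschitz bound by comparing the metric on $M'$ with a product metric (constant $c^2$), whereas you use the equivariant bound on $\|d\pi_0\|$ directly, and that your discussion of parametrizing $\pi_0(\ell)$ by pulled-back arclength makes explicit a point the paper leaves implicit.
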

\begin{proof}
    With some abuse of notation, we do not distinguish between a Riemannian metric $g$ and its induced norm $\|\cdot\|_g$ on a tangent space.
    Because $M'$ is compact, every Riemannian metric $\|\cdot\|_g$ on $M'$ is equivalent to the product metric, defined by $\|v\|'=\|d\pi_0v\|_M+\|d\pi_Fv\|_F$ for a vector $v\in TM'$, where $\pi_F$ denotes the projection to the fibres $F$ and $\|\cdot\|_M$ and $\|\cdot\|_F$ are fixed Riemannian metrics of $M$ and $F$ (under the metric $\|\cdot\|'$ the tangent space of $M$ and that of $F$ are orthogonal). This means that there exists a constant $c\geq 1$ such that
    $$\frac{1}{c}\|\cdot\|_g\leq \|\cdot\|'\leq c\|\cdot\|_g.$$
    The metric lifted to $\widetilde M\times F$ equivariantly is also equivalent to a product metric. Let us denote the product metric on $\widetilde M\times F$ by $\|\cdot\|=\|\cdot\|_{\widetilde M}+\|\cdot\|_F$ ($\|\cdot\|_F$ can vary in different fibres, of course) and the lifted metric by $\|\cdot\|_{\tilde g}$.

    For any curve $\ell$ lifted from $M'$, let $p,q\in\ell$ be two points. We want to show that there exists a constant $k'$, such that
    $$\frac{1}{k'}d_{\pi_0(\ell)}(\pi_0(p),\pi_0(q))-k'\leq d_{\widetilde M}(\pi_0(p),\pi_0(q)) \leq k'd_{\pi_0(\ell)}(\pi_0(p),\pi_0(q))+k',$$
    where $d_{\pi_0(\ell)}$ denotes the distance along $\pi_0(\ell)$, with respect to $\|\cdot\|_{\tilde g}$.
    We denote the distance with respect to $\|\cdot\|$ along $\ell$ and $\pi_0(\ell)$ by $d'_{\ell}$ and $d'_{\pi_0(\ell)}$ respectively.
    Because the distance along $\ell$ or $\pi_0(\ell)$ is simply integrating along $\ell$ or $\pi_0(\ell)$ with respect to the corresponding Riemannian metrics,
    we have 
    $$\frac{1}{c}d_{\widetilde M}(\pi_0(p),\pi_0(q))\leq \frac{1}{c}d_{\pi_0(\ell)}(\pi_0(p),\pi_0(q)) \leq d'_{\pi_0(\ell)}(\pi_0(p),\pi_0(q))\leq d'_{\ell}(p,q)\leq cd_\ell(p,q).$$

    Now, $\ell$ is a quasi-geodesic with respect to the metric on $\widetilde M$. By our definition, there exists a $k$ such that 
    $$\frac{1}{k}d_{\ell}(p,q)-k\leq d_{\widetilde M}(\pi_0(p),\pi_0(q)) \leq kd_{\ell}(p,q)+k.$$
    Then we have
    $$\frac{1}{c^2k}d_{\pi_0(\ell)}(\pi_0(p),\pi_0(q))-k\leq \frac{1}{k}d_\ell(p,q)-k\leq d_{\widetilde M}(\pi_0(p),\pi_0(q))\leq d_{\pi_0(\ell)}(\pi_0(p),\pi_0(q)).$$
    We can take $k'=c^2k\geq k$, and $\pi_0(\ell)$ is $k'$-quasi-geodesic in $\widetilde M$.
\end{proof}

Note that in the above proof, $c,k'$ depend only on the Riemannian metric. Therefore, if a flow on $M'$ is uniformly quasi-geodesic with respect to the metric on $\widetilde M$, then its orbits are uniformly quasi-geodesic when projected to $M$.

We want to first define a metric on $(\widetilde M\times F)\times\T^d$ that is equivariant and projects to $N$, and then show that the flow $\Phi^t$ is Anosov by the cone argument (see for example \cite{fisher-hasselblatt}).

First, fix a Riemannian metric on $M'$ and lift it to $\widetilde M\times F$. Denote it by $\|\cdot\|_h$ and the induced metric $d_h$. 
We define a metric $\|\cdot\|$ on $(\widetilde M\times F)\times \T^d$ to be such that for any vector $w\in T((\widetilde M\times F)\times \T^d)$, we write $w=w_h+w_v$ where $w_h\in T(\widetilde M\times F)$ and $w_v\in T\T^d$, and let $\langle w_h,w_v\rangle =0$ and $\|w\|=\|w_h\|_h+\|w_v\|_v$. Here, $\|\cdot\|_v$ is defined in the following way. We also write $\|w\|_h=\|w_h\|=\|w_h\|_h$ and $\|w\|_v=\|w_v\|=\|w_v\|_v$.

We fix a basepoint $x_0$ in $\widetilde M$ and let $D_0$ denote the fundamental domain in $\widetilde M$ that contains $x_0$. Over any point $x\in D_0\times F$, we pick a standard metric $\|\cdot\|_0$ on $\T^d$ induced by the Euclidean metric, and let $\|\cdot\|_v=\|\cdot\|_0$ at any $(x,y)\in (D_0\times F)\times \T^d$. Then for any $x'\in \widetilde M\times F$, there exists a unique $\gamma\in\Gamma$ such that $\pi_0(x')\in \gamma D_0$. We define at such a point $(x',y)\in (\gamma D_0\times F)\times \T^d$, $\|\cdot\|_v:=\|\rho(\gamma)(\cdot)\|_0$. This metric is equivariant but discontinuous. We also get a discontinuous metric $\|\cdot\|$ that descends to $N$. We will denote it again by $\|\cdot\|$.
But for any Riemannian metric $\|\cdot\|_g$ on $N$, there exists a constant $r\geq 1$ such that
$$\frac{1}{r}\|\cdot\|_g\leq \|\cdot\|\leq r\|\cdot\|_g.$$

By the construction of $\Phi$, we already have an invariant splitting $TN= TM'\oplus T\T^d$ and since $\varphi^t$ is Anosov on $M'$, we only need to check that there are invariant stable and unstable cones in the $\T^d$-fibres. 

Let us denote by $\pi_2:N\to M'$, $\pi_3:M'\to M$ the projections of the bundles, and $\pi_4:(\widetilde M\times F)\times \T^d\to N=((\widetilde M\times F)\times \T^d)/\Gamma$ and $\pi_5: \widetilde M\times F\to M'=(\widetilde M\times F)/\Gamma$ the projection induced by the quotients.
Without loss of generality, we consider the flow of $\Phi^t$ along the orbit that projects to the orbit of $\pi_5\tilde\varphi^t(x_0,y_0),t\in \R$ where $(x_0,y_0)\in \widetilde M\times F$ and $x_0$ is the basepoint (as it is just a choice of basepoint). Let us still denote the lifted orbit $\tilde\varphi^t(x_0,y_0)$ in $\widetilde M\times F$ by $\ell$.

\begin{lemma} \label{lemma: over periodic orbits}
    Suppose $\pi_5(\ell)$ is a periodic orbit. Then the restriction of $\Phi^t$ to this orbit is an Anosov suspension flow.
\end{lemma}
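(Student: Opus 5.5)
Suppose $\pi_5(\ell)$ is periodic of period $T>0$. Then $\tilde\varphi^T(x_0,y_0)=(x_0,y_0).\gamma$ for a unique $\gamma\in\Gamma$, with $\gamma\neq 1$ (we may take $T$ to be the minimal period, and the deck action on $\widetilde M\times F$ is free). The plan is to identify the restriction of $\Phi^t$ to $\pi_2^{-1}(\pi_5(\ell))$ with the suspension of $\rho(\gamma)^{\pm1}$ on $\T^d$, and then to show that this matrix is hyperbolic using Claim \ref{claim: existence of representation}.

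\textbf{Step 1: identify the restricted flow as a suspension.} The preimage $\pi_2^{-1}(\pi_5(\ell))$ is the quotient of $\ell\times\T^d\cong\R\times\T^d$ by the cyclic subgroup $\langle\gamma\rangle$. Here $\gamma$ acts by
$$(\tilde\varphi^t(x_0,y_0),y)\mapsto(\tilde\varphi^{t+T}(x_0,y_0),\rho(\gamma)^{-1}y),$$
using that the lifted flow commutes with the deck action. The lifted flow $\tilde\Phi^t$ translates the $\R$-coordinate and fixes the $\T^d$-coordinate. Hence the quotient is the mapping torus $(\R\times\T^d)/\big((t,y)\sim(t+T,\rho(\gamma)^{-1}y)\big)$. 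On it, $\Phi^t$ is exactly the suspension flow of $A:=\rho(\gamma)$ (or of $A^{-1}$, depending on the orientation convention), with return time $T$.

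\textbf{Step 2: hyperbolicity of $A$.} We have $\pi_3\pi_5(\ell)$ a closed curve in $M$, and by the Lemma above it is a uniform quasi-geodesic. So $\gamma$, viewed in $\pi_1M$, is a nontrivial element, namely the class of $\pi_3\pi_5(\ell)$ up to conjugacy. Claim \ref{claim: existence of representation} and Proposition \ref{prop: translation length} then give that $\rho(\gamma)\in SL(d,\Z)$ has eigenvalues of modulus $|\lambda|^{\pm\ell(\gamma)}$ with $\ell(\gamma)>0$, so none lie on the unit circle. Therefore $A$ is an Anosov toral automorphism, and its suspension is an Anosov flow on the mapping torus: the stable and unstable bundles are the constant-coefficient eigenspace sums, carried along the flow.

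\textbf{Main obstacle.} The delicate point is ensuring that the element $\gamma$ produced by the periodic orbit is nontrivial in $\Gamma=\pi_1M$, not merely in $\pi_1M'$. A priori the orbit could close up by moving only in the $F$-direction. I would rule this out with the quasi-geodesic hypothesis (3): if $\gamma$ were trivial, then $\pi_0(\tilde\varphi^{nT}(x_0,y_0))=\pi_0(x_0,y_0)$ for all $n$. But by the Lemma, $d_{\widetilde M}\big(\pi_0(\tilde\varphi^{nT}(x_0,y_0)),\pi_0(x_0,y_0)\big)\geq nT/k'-k'\to\infty$, a contradiction. With this settled, the rest reduces to the constant-matrix linear algebra of Step 2.
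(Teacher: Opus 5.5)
Your proof is correct for the lemma as stated, and it takes a somewhat different, cleaner route than the paper. You identify $\pi_2^{-1}(\pi_5(\ell))$ outright as the mapping torus of $\rho(\gamma)$ with constant return time $T$, using that the stabiliser of $\ell$ is $\langle\gamma\rangle$. You then get hyperbolicity qualitatively from $\gamma\neq 1$ in a torsion-free cocompact Kleinian group. Quasi-geodesicity enters only to rule out a lifted orbit that closes up in $\widetilde M\times F$, and no Morse lemma is needed. Note that it is this argument, not the parenthetical about minimal period and freeness, that gives $\gamma\neq 1$.

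The paper instead uses the Morse lemma to match the deck element with the loxodromic class whose axis fellow-travels $\pi_0(\ell)$. It takes $E^{s/u}$ to be the eigenspaces of $\rho(\gamma)$, and then tries to bound the contraction of $\Phi^T$ by constants depending only on the metric and the quasi-geodesic constant. That uniformity is not part of the statement. However, the subsequent Proposition relies on it to pass from periodic orbits to their closure, and your argument, which only yields $\ell(\gamma)>0$, does not supply it.

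You already have the ingredient. Let $n\to\infty$ in your bound $d_{\widetilde M}(x_0,x_0.\gamma^n)\geq nT/k'-k'$, after comparing arc length with time via the bounded flow speed. This gives $\ell(\gamma)\geq T/k''$ with $k''$ independent of the orbit, so the eigenvalues of $\rho(\gamma)$ on $E^s$ have modulus at most $|\lambda|^{T/k''}$. It is this lower bound on $\ell(\gamma)$ that yields uniform constants. The paper's displayed estimate $|\lambda|^{\ell(\gamma)}\leq|\lambda|^{kT+k}$ would require $\ell(\gamma)\geq kT+k$. That is the wrong direction, since $|\lambda|<1$ and quasi-geodesicity bounds $\ell(\gamma)$ from above by roughly $kT+k$.
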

\begin{proof}
    There exist $T\in \R_{>0}$ and $\alpha\in\Gamma$ such that $\pi_0\tilde\varphi^T(x_0,y_0)=\alpha x_0$. Because $\pi_0(\ell)$ is a quasi-geodesic of a hyperbolic manifold $M$, there exist a unique geodesic $\gamma$ and a constant $K>0$ sharing the same endpoints on the boundary $\partial\widetilde M$ such that $\pi_0(\ell)\subseteq N_K(\gamma)$. Here we let $\gamma$ denote both a class in the fundamental group $\Gamma$ and the unique geodesic in this class (the axis of $\gamma$), lifted to $\widetilde M$, and let $N_K(\gamma)$ denote the $K$-neighborhood of $\gamma$. (See, for example, the Morse Lemma, \cite{bridson-haefliger} III.H Theorem 1.7.) Since $\alpha^nx_0$ converges as $n\to+\infty$ or $-\infty$ to the endpoints of $\alpha$ on $\partial\widetilde M$, which are invariant under $\alpha$,  we have $\alpha=\gamma$.

    Now, we consider $\ell\times\T^d\subseteq \widetilde M\times F\times\T^d$. We only need to show that the first return map $\Phi^T$ is Anosov.
    Recall from Proposition \ref{prop: translation length} what the eigenvalues of $\rho(\gamma)$ are, and we have picked $\lambda=e^{-1/2}\in (0,1)$.
    Let $(x_0,y_0,z_0)\in (\widetilde M\times F)\times \T^d$.
    In the $\T^d$-fibre over $\pi_5(x_0,y_0)$, we let the stable and unstable subspaces at a point $\pi_4(x_0,y_0,z_0)$ be $\pi_4 E_\gamma^{s/u}$, where $E^s_\gamma$ is the invariant subspace corresponding to the eigenvalues $\lambda^{\ell(\gamma)+i\theta(\gamma)},\bar\lambda^{\ell(\gamma)+i\theta(\gamma)}$ and $E^u_\gamma$ is the invariant subspace corresponding to the eigenvalues $\lambda^{-(\ell(\gamma)+i\theta(\gamma))},\bar\lambda^{-(\ell(\gamma)+i\theta(\gamma))}$. 
    Note that they are constant along each $(F\times\T^d)$-fibre.
    They are invariant under $d\Phi^T$, because $d\tilde\Phi^T|_{T\T^d}=\id$ in the fibres and $E^{s/u}_{\gamma,(x_0,y_0,z_0)}$ is identified with $\rho(\gamma)^\inv E^{s/u}_{\gamma,(x_0.\gamma,\gamma^\inv.y_0,\rho(\gamma)^\inv z_0)}$ in the quotient $N$.

    Since $\ell$ is a $k$ quasi-geodesic, we have for any vector $v\in E^s_{\gamma,\pi_4(x_0,y_0,z_0)}$, and any smooth Riemannian metric $g$ on $N$, there exist $r,k\geq 1$ such that
    $$\frac{1}{r}\|d\Phi^T(v)\|_g\leq \|d\Phi^T(v)\|=|\lambda|^{\ell(\gamma)}\|v\|\leq |\lambda|^{kT+k}\|v\|\leq r|\lambda|^{kT+k}\|v\|_g,$$
    where $\|\cdot\|$ is the discontinuous metric that we defined previously.
    We can take $C=r^2|\lambda|^k$ and $\lambda'=|\lambda|^k$. Then we have
    $$\|d\Phi^T(v)\|_g\leq C(\lambda')^T\|v\|_g.$$
    Similarly for $v\in E^u_{\gamma, \pi_4(x_0,y_0,z_0)}$.
    Here, $r$ and $k$ depend only on $g$. 
    Therefore, the restriction of $\Phi^t$ over $\pi_5(\ell)$ is an Anosov suspension flow.
\end{proof}

\begin{lemma}
    If an orbit of $\Phi^t$ is projected to a closed geodesic in $M$, then the restriction of $\Phi^t$ to this orbit is Anosov.
\end{lemma}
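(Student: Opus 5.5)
The plan is to adapt the proof of Lemma \ref{lemma: over periodic orbits}. The difference is that the orbit $\pi_5(\ell)$ in $M'$ need not be periodic, because the $F$-coordinate may never close up; only its projection to $M$ is the closed geodesic. So there is no first return map, and I will instead estimate $d\Phi^t$ directly for all $t$.

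\textbf{Step 1: choose the loxodromic element and check well-definedness.} Since $\pi_3\pi_5(\ell)$ is a closed geodesic, $\pi_0(\ell)$ lies on the axis $A_\gamma$ of some primitive $\gamma\in\Gamma$. Replacing $\gamma$ by $\gamma^{-1}$ if needed, I take $\gamma$ to translate $A_\gamma$ in the direction of the flow. Along $\ell\times\T^d$ I define $E^{s/u}$ to be the constant subspaces $E^{s/u}_\gamma$ from Proposition \ref{prop: translation length}, and push them to $N$ by $\pi_4$. For this to be well defined, suppose $\beta\in\Gamma$ identifies two points of $\ell\times\T^d$. Then $\beta$ maps $\ell$ into the orbit through its image, so $\beta$ preserves $A_\gamma$. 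Since $\Gamma$ is torsion free and discrete, $\beta$ is a power of $\gamma$, and hence $\rho(\beta)$ preserves $E^{s/u}_\gamma$. Invariance under $d\Phi^t$ is then automatic, because $d\tilde\Phi^t$ is the identity on $T\T^d$.

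\textbf{Step 2: compare the discontinuous metric with powers of $\rho(\gamma)$.} For $t\in\R$, let $g_t\in\Gamma$ be the element with $\pi_0\tilde\varphi^t(x_0,y_0)\in g_tD_0$; by construction $\|\cdot\|_v=\|\rho(g_t)\cdot\|_0$ there. The point $\pi_0\tilde\varphi^t(x_0,y_0)$ lies on $A_\gamma$, and the segment of $A_\gamma$ between $x_0$ and $\gamma x_0$ meets only finitely many translates of $D_0$. Hence $g_t=\gamma^{m(t)}h_t$ with $h_t$ ranging over a finite set $H\subset\Gamma$. I will pin down the left/right convention against the action $(x,y).\gamma=(x.\gamma,\rho(\gamma)^\inv y)$; this is routine bookkeeping. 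Also $m(t)\,\ell(\gamma)$ differs from $d_{\widetilde M}(\pi_0(x_0),\pi_0\tilde\varphi^t(x_0,y_0))$ by at most a constant. Setting $c_H=\max_{h\in H}\max(\|\rho(h)\|,\|\rho(h)^{-1}\|)$, for $v\in E^s_\gamma$ I get
\[
\|d\Phi^t v\|\le c_H^2\,\|\rho(\gamma)^{m(t)}v\|_0\le c_H^2C_0\,|\lambda|^{m(t)\ell(\gamma)}\|v\|.
\]
Here $C_0$ absorbs the rotation part $\theta$, which contributes only a bounded factor because on the real $4$-dimensional subspace $E^s_\gamma$ the matrix $\rho(\gamma)$ is conformal up to a fixed change of basis.

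\textbf{Step 3: convert to exponential contraction in $t$.} The orbit is uniformly quasi-geodesic with respect to $\widetilde M$, and it moves forward along $A_\gamma$, so
\[
m(t)\,\ell(\gamma)\ge d_{\widetilde M}(\pi_0(x_0),\pi_0\tilde\varphi^t(x_0,y_0))-c\ge t/k-k-c.
\]
This gives $\|d\Phi^tv\|\le C'(|\lambda|^{1/k})^t\|v\|$, and the symmetric argument with $t\mapsto -t$ handles $E^u_\gamma$. Finally, the equivalence constant $r$ between $\|\cdot\|$ and any smooth metric $g$ on $N$ transfers these bounds to $g$.

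\textbf{Main obstacle.} I expect the delicate point to be Step 2: showing that $g_t$ is a power of $\gamma$ up to a uniformly bounded error $h_t$, with the correct sign of $m(t)$. I would do this by noting that $\langle\gamma\rangle$ acts cocompactly on $A_\gamma$, so a fundamental segment $[x_0,\gamma x_0]$ of the axis meets only finitely many translates $hD_0$. If needed I would first move the basepoint $x_0$ onto $A_\gamma$, which only changes constants.
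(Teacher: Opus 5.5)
Your proof is correct and uses the same ingredients as the paper's: the constant eigenspaces $E^{s/u}_\gamma$ of $\rho(\gamma)$, invariance because $d\tilde\Phi^t$ is the identity on $T\T^d$, the discontinuous metric, and the quasi-geodesic comparison. You organize the time estimate differently, though.

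The paper only looks at the time $T$ at which the projection to $M$ reaches $x_0.\gamma$, where the fibres are identified by $\rho(\gamma)^{-1}$. It then appeals to ``the same computation'' as for periodic orbits. That bounds $d\Phi^T$ alone. Since the orbit in $M'$ need not be periodic, later returns to $x_0.\gamma^n$ need not occur at multiples of $T$, and intermediate times are not addressed.

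Your decomposition $g_t=h_t\gamma^{m(t)}$, with $h_t$ in a finite set, combined with $m(t)\ell(\gamma)\geq t/k-k-c$, controls every $t$. It also uses the lower quasi-geodesic bound, which is the one actually needed for contraction. The price is that $c_H$ and $c$ depend on $\gamma$, through the segment $[x_0,x_0.\gamma]$. This is harmless for this single-orbit lemma. It would not, by itself, give the uniformity over orbits invoked in the next proposition. The paper's bound at the return point has no such factor, but it also says nothing about intermediate times.

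One caution: the convention bookkeeping you defer is not cosmetic. With the paper's right action $(x,y).\gamma=(x.\gamma,\rho(\gamma)^{-1}y)$, the metric $\|\rho(g)\cdot\|_0$ is invariant only if the paper's translate $gD_0$ is read as $D_0.g$. The set of such translates meeting $A_\gamma$ is invariant under $g\mapsto g\gamma$. Hence $g_t=h_t\gamma^{m(t)}$ and
$\|d\Phi^tv\|=\|\rho(h_t)\rho(\gamma)^{m(t)}v\|_0$, with the bounded factor applied last. In the order you wrote, $\rho(\gamma^{m(t)}h_t)v=\rho(\gamma)^{m(t)}\rho(h_t)v$, the vector $\rho(h_t)v$ is no longer in $E^s_\gamma$, and your displayed inequality would fail in general. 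In a consistent left-action convention the invariant metric is $\|\rho(g)^{-1}\cdot\|_0$. The finite factor then again lands outside, and $E^s_\gamma$ and $E^u_\gamma$ trade roles for the forward-translating element.
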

\begin{proof}
    Let $(x_0,y_0,z_0)\in(\widetilde M\times F)\times\T^d$ be a point such that there exists a $T>0$, the projection of the orbit $\pi_4 \tilde\Phi^T(x_0,y_0,z_0)$ to $M$ is a closed geodesic.
    The stable and unstable subspaces at $\pi_4(x_0,y_0,z_0)$ can be defined by the projections of the eigenspaces $\pi_4 E_\gamma^{s/u}$ of $\rho(\gamma)$, constant in each $(F\times\T^d)$-fibre. They are invariant under $d\Phi^T$ because $d\tilde\Phi^T|_{T\T^d}=\id$ in the fibres and $E^{s/u}_{\gamma,(x_0,y_0,z_0)}$ is identified with $\rho(\gamma)^\inv E^{s/u}_{\gamma,(x_0.\gamma,\gamma^\inv.y_0,\rho(\gamma)^\inv z_0)}$.
    Then the same computation as in Lemma \ref{lemma: over periodic orbits} tells us that there exist constants $C>0,\lambda'\in(0,1)$, such that for any $v\in E^s_{\gamma,\pi_4(x_0,y_0,z_0)}$,
    $$\|d\Phi^t(v)\|_g\leq C(\lambda')^T\|v\|_g,$$
    where $g$ denotes a metric on $N$. Similarly for $v\in E_{\gamma,\pi_4(x_0,y_0,z_0)}^u$. Therefore $\Phi^t$ is Anosov over this particular orbit.
\end{proof}

\begin{prop}
    If $\varphi^t$ is transitive or if every orbit of $\varphi^t$ projects to a geodesic in $M$, then $\Phi^t$ is Anosov.
\end{prop}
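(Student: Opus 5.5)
The plan is to build the fibrewise splitting $V^s\oplus V^u$ of $T\T^d$ along every orbit of $\Phi^t$ with uniform constants, and then apply the result of \cite{BBGRH} quoted after Definition \ref{def: fibrewise Anosov} to conclude that $\Phi^t$ is Anosov on $N$. The two hypotheses call for two different routes.

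\textbf{Geodesic case.} Here every orbit of $\varphi^t$ lifts to a curve $\ell$ with $\pi_0(\ell)$ a geodesic $c$ in $\widetilde M$. I take an isometry $h$ sending the standard axis $(0,\infty)$ to $c$ and conjugate so the geodesic flow along $c$ acts as $\mathrm{diag}(e^{t/2},e^{-t/2})$. I then use the $SL(2,\C)$-factor of $\QA\otimes_\Q\R$ to define a splitting of $\R^d$ varying continuously with $c$. At a point of $\ell$ over $\gamma D_0$, let $V^{s/u}$ be the eigenspaces of the left-multiplication operators attached to $c$, namely those coming from the attracting and repelling fixed points of $c$ in $\partial\hyp^3$. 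These are the images, under the restriction-of-scalars map $\phi$, of the subspaces of $M_2(\C)$ consisting of matrices whose columns lie in the line corresponding to the attracting (respectively repelling) endpoint. This is a $\Gamma$-equivariant assignment from oriented geodesics to splittings of $\R^d$, so it descends to $N$. It is invariant because $d\tilde\Phi^t$ is the identity on the $\T^d$-coordinate, so invariance reduces to the constancy of the assignment along $c$.

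For the contraction estimate, I compare $\|\cdot\|_v$ at $\gamma_1D_0$ and $\gamma_2D_0$ using $\rho(\gamma_2\gamma_1^{-1})$. This element moves a bounded neighbourhood of a point of $c$ a distance of roughly $d(\pi_0 p,\pi_0 q)$ along $c$, so on $V^s$ its norm is at most $C e^{-d/2}$, up to a bounded error from the bounded fundamental domain. Combining this with the uniform quasi-geodesic bound of hypothesis (3), via the lemma above, gives $\|d\Phi^t v\|\le C\lambda'^t\|v\|$, with the constants $r,k$ independent of the orbit.

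\textbf{Transitive case.} Periodic orbits are dense, and by Lemma \ref{lemma: over periodic orbits} each carries an invariant hyperbolic splitting. The same estimate shows the constants there are uniform: they depend only on $k$, $r$, and on the bounded error in replacing a quasi-geodesic by its axis, which is controlled by the Morse lemma constant $K$, itself uniform in $k$. I will extend the splitting by the cone-field argument \cite{fisher-hasselblatt}. Define cones around the splitting determined by the endpoints of the quasi-geodesic $\pi_0(\ell)$; every orbit, not just periodic ones, has two distinct endpoints in $\partial\hyp^3$ by the Morse lemma. This is the same construction as in the geodesic case, with $c$ replaced by the geodesic at bounded distance $K$. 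Uniform hyperbolicity for times $t\ge T_0$ then follows as above, with a multiplicative error $e^{O(K)}$ that is absorbed into $C$. This argument in fact does not need transitivity, only hypothesis (3) and the Morse lemma; transitivity serves as a back-up, giving density of periodic orbits so that continuity of the splitting passes to the limit.

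\textbf{Main obstacle.} The hardest step is making the vertical estimate uniform. The discontinuous metric $\|\cdot\|_v$ involves $\rho(\gamma)$, whose action on the stable subspace of \emph{another} geodesic is not a pure contraction. I expect to handle this with hyperbolic geometry in $SL(2,\C)$. If $g$ moves $x$ to $y$ with both points within $K$ of the axis $c$, then $g\,h$ and $h\,a_{d(x,y)}$ differ by an element of a compact set, where $a_t=\mathrm{diag}(e^{t/2},e^{-t/2})$. Applying $\rho$ gives the bound $\|\rho(g)|_{V^s}\|\le C_K e^{-d(x,y)/2}$ with $C_K$ depending only on $K$. Feeding in the quasi-geodesic inequality then yields exponential contraction in $t$.
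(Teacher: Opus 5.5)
Your argument is correct, but it is organised differently from the paper's. The paper proves fibre hyperbolicity only over orbits whose projection to $M$ closes up (Lemma~\ref{lemma: over periodic orbits} and the lemma after it). There the invariant subspaces are eigenspaces of $\rho(\gamma)$ for a single deck element, and the contraction at the return time is read off from Proposition~\ref{prop: translation length}. The splitting is then extended to all of $N$ by density of such orbits and the closure argument of \cite{bonatti-diaz-viana}. Transitivity or the geodesic hypothesis is used only to supply that density.

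You instead define $V^{s/u}=V_{\xi^{\pm}}$ over \emph{every} orbit from the endpoints of $\pi_0(\ell)$; on periodic orbits these are exactly the paper's eigenspaces. You then prove the contraction at all times from a decomposition $g=h\,a_s\,q$, with $h,q$ in compact sets determined by the Morse constant and $\mathrm{diam}\,D_0$. This builds in uniformity: there is no limiting argument, and no need to control intermediate times uniformly over periodic orbits of unbounded period. The paper's lemmas only estimate $d\Phi^T$ at the return time, and its assertion that the constants are uniform tacitly needs this. As you observe, your route also makes hypothesis (4) of Theorem~\ref{theorem: main theorem} superfluous once (3) holds. The paper's route is shorter and uses only the eigenvalue computation. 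On your route, the cone and density remarks in your transitive case are unnecessary.

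One correction to your last paragraph. From $gh=h\,a_d\,q$ with $q$ in a compact set, the operator that satisfies $\|\rho(g)^{-1}|_{V_{\xi^+}}\|\le C_Ke^{-d/2}$ is
$$\rho(g)^{-1}=\rho(h)\rho(q)^{-1}\rho(a_{-d})\rho(h)^{-1},$$
not $\rho(g)$. The estimate works only because the compact factor acts after the diagonal one; in $\rho(g)=\rho(h)\rho(a_d)\rho(q)\rho(h)^{-1}$ it acts first and generically produces expansion of order $e^{d/2}$ on $V_{\xi^+}$.

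This is still exactly what you need. With left actions and fibre norm $\|\rho(g_i)^{-1}\cdot\|_0$ over $g_iD_0$, the ratio of norms along an orbit segment is governed by $\rho(g_2^{-1}g_1)$. That is $\rho$ of the inverse of the forward displacement $g=g_1^{-1}g_2$.

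Also, $d$ should be the \emph{signed} displacement along the axis towards $\xi^+$, not $d(x,y)$. That it grows linearly in $t$ follows from coarse monotonicity of the closest-point projection of a uniform quasi-geodesic onto its axis.
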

\begin{proof}
    In either case, the set of the projections of orbits to $M$ over which $\Phi^t$ is Anosov is dense in $M$. By the construction of $\Phi^t$, the hyperbolicity in each $\T^d$-fibre only depends on the projection of the orbit to $M$, and the constants $C,\lambda'$ in the previous lemmas are uniform for all such orbits. The stable and unstable subbundles extend to the closure by, for example \cite{bonatti-diaz-viana} Appendix B.1, and the uniform hyperbolicity also extends to the limit points by continuity.
\end{proof}

This completes the proof of Theorem \ref{theorem: main theorem}. Note that the algebraic action is smooth in the $\T^d$-fibres, so the regularity of the flow depends on the base flow.

\begin{proof}[Proof of Corollary \ref{coro: over FMP}]
    The \cite{fenley-mann-potrie} example is on a flat bundle $(\widetilde M\times\sph^1)/\Gamma$, where $\widetilde M/\Gamma$ can be any closed hyperbolic 3-manifold. The candidate of the hyperbolic 3-manifold can be picked in the following way.
    Suppose we have a torsion free arithmetic Kleinian group that satisfies assumption (1) of Theorem \ref{theorem: main theorem}.
    Up to a finite index subgroup $\Gamma$, the corresponding hyperbolic 3-manifold admits a pseudo-Anosov flow \cite{agol}.
    Then one can construct a minimal action of $\Gamma$ on $\sph^1$ and the Cannon-Thurston map as in \cite[Section 5.2]{fenley-mann-potrie}, and then an Anosov flow on $M':=(\widetilde M\times\sph^1)/\Gamma$.
    Its orbits project to geodesics on a hyperbolic 3-manifold. Also, it can be approximated by a smooth volume preserving (and hence transitive) Anosov flow by \cite{asaoka}. 
    Therefore, we can construct a fibrewise Anosov flow over it.

    The fibrewise Anosov flow is non-algebraic.
    Because $\sph^1$, $\T^d$ and hyperbolic manifolds are aspherical, the higher homotopy groups of $M'$ vanish and hence the higher homotopy groups of $N$ also vanish.
    So $N$ cannot be a sphere bundle and thus $\Phi^t$ is not a geodesic flow. We look at the short exact sequences
    $$\begin{tikzcd}
        1\rar & \Z \rar & \pi_1(M') \rar["p_1"] & \Gamma \rar & 1
    \end{tikzcd}$$
    and
    $$\begin{tikzcd}
        1\rar & \Z^d \rar & \pi_1(N) \rar["p_2"] & \pi_1(M') \rar & 1
    \end{tikzcd}.$$
    We also know that $\pi_1(N)\not\cong \Z^m\rtimes\Z$, because otherwise the projection $p_1\circ p_2(\pi_1(N))$ to $\Gamma$ would be solvable, but $\Gamma$ is a hyperbolic 3-manifold group.
    Hence $\Phi^t$ is not a suspension flow.
    On the other hand, $N$ is of dimension $4+8m, m\in\Z_{\geq 1}$. If $\Phi^t$ is a nil-suspension over a geodesic flow, the fibre dimension would be odd.
    The geodesic flow on the unit tangent bundle of a hyperbolic space has the property that there exists a pair of periodic orbits $a,b$, such that $a$ is freely homotopic to the reverse of $b$.
    But from \cite{BBGRH}, we know that a fibrewise Anosov flow whose base flow has this property must have even dimensional fibres.
\end{proof}

\begin{remark}
    There exist infinitely many closed flow invariant submanifolds. 

    At the end of \cite{filip-fisher-lowe}, a question was briefly mentioned on whether an Anosov flow that preserves infinitely many flow invariant submanifolds is necessarily algebraic. Corollary \ref{coro: over FMP} produces an example of a non-algebraic Anosov flow that has infinitely many flow invariant submanifolds. 

    The bundle $N$ is foliated by manifolds that lift to $\widetilde M$, and there exists a leaf that becomes a closed manifold if there exists some $z\in \T^d$, such that $\rho(\Gamma).z$ is a finite set. Then $\rho(\Gamma).z$ would be an invariant set under the $\Gamma$ action and $((\widetilde M\times F)\times (\rho(\Gamma).z))/\Gamma$ would be a closed flow invariant submanifold.

    We can take the finer lattice $\frac{1}{p}\Z^d\subseteq \R^d$, where $p$ is a prime number. In a torus, it consists of finitely many points and is invariant under $SL(d,\Z)$. Since $\frac{1}{p}\Z^d\cap \frac{1}{q}\Z^d=\emptyset$ if the primes $p\neq q$, there are infinitely many closed submanifolds that are invariant under the flow.
\end{remark}

We would like to end with some discussions on the possibility of constructing fibrewise Anosov flows over hyperbolic 3-manifolds, using the arithmetic Kleinian group actions.

To construct a fibrewise Anosov flow over an Anosov flow on a closed hyperbolic 3-manifold, we want to check whether the currently known examples satisfy all the following conditions.
\begin{enumerate}
    \item The hyperbolic 3-manifold is arithmetic, and the arithmetic Kleinian group arises from a quaternion algebra $\QA$ having no real places.
    \item The flow is quasi-geodesic.
\end{enumerate}

Besides, any torus bundle constructed in this way would not fall into any of the cases of algebraic Anosov flows. It is not a sphere bundle or a mapping torus by the same argument as in the proof of Corollary \ref{coro: over FMP}.
It is not a nil-suspension either. As before, we always have 
\begin{equation} \label{equation: sequence fibrewise over hyp 3} \begin{tikzcd}
    1\rar & \Z^d\rar & \pi_1(N)\rar & \Gamma\rar & 1
\end{tikzcd} \end{equation}
where $\Gamma$ is the fundamental group of a closed hyperbolic 3-manifold.
Assume $N$ is a nil-suspension over a geodesic flow, whose base is an $\sph^k$-bundle over another hyperbolic manifold. If $k\geq 3$, then  $\pi_k(N)\cong \pi_k(\sph^k)$, which is impossible because $\pi_k(N)$ vanishes for $k\geq 2$.
If $k=2$, we have 
$$\begin{tikzcd}
    1\rar & \Z\rar & \Z^{d-2}\rar & \pi_1(N)\rar["p_1"] & \hat\Gamma \rar & 1
\end{tikzcd}$$
where $\hat\Gamma$ denotes the fundamental group of a $\sph^2$ bundle over some hyperbolic 3-manifold,
but $\ker p_1$ should contain a free abelian subgroup of rank at least $d-1$.
If $k=1$, we have 
\begin{equation}\label{equation: sequence k=2} \begin{tikzcd}
    1\rar & \Z^d\rar & \pi_1(N)\rar & \pi_1(T^1S)\rar & 1
\end{tikzcd} \end{equation}
for some hyperbolic surface $S$.
But the $\Z^d$ subgroups in both sequences (\ref{equation: sequence fibrewise over hyp 3}) and (\ref{equation: sequence k=2}) are free abelian normal subgroups and, therefore, the $\Z^d$ subgroup of $\pi_1(N)$ in (\ref{equation: sequence k=2}) is a subgroup of the $\Z^d$ subgroup in (\ref{equation: sequence fibrewise over hyp 3}). The quotient in $\Gamma$ would be torsion, which is impossible. Thus, there is an isomorphism between the two $\Z^d$ subgroups, and hence $\Gamma\cong\pi_1(T^1S)$. However, $\pi_1(T^1S)$ contains a $\Z^2$ subgroup, whereas $\Gamma$ has rank 1.

Anosov flows on hyperbolic 3-manifolds are transitive \cite{brunella}.
Fenley \cite{fenley-AnosovFlowsIn3Manifolds, fenley-QGImpliesNonRcov, fenley-NonRcovImpliesQG} showed that an Anosov flow on a closed hyperbolic 3-manifold is quasi-geodesic if and only if it is non-$\R$-covered. While it might be easier to determine whether the Goodman surgery \cite{goodman} on a suspension flow produces an arithmetic manifold, such examples are $\R$-covered and hence not quasi-geodesic. Moreover, as they have infinitely many periodic orbits in the same homotopy class, they cannot be the base of a fibrewise Anosov flow by \cite{PZ26}.
Whether other candidates for Anosov flows on hyperbolic 3-manifolds, e.g. \cite{bonatti-iakovoglou, beguin-yu, BBMY}, are arithmetic might itself be a hard question.

\

\bibliographystyle{alpha}
\bibliography{bibfile}

@book {maclachlan-reid,
    AUTHOR = {Maclachlan, Colin and Reid, Alan W.},
     TITLE = {The arithmetic of hyperbolic 3-manifolds},
    SERIES = {Graduate Texts in Mathematics},
    VOLUME = {219},
 PUBLISHER = {Springer-Verlag, New York},
      YEAR = {2003},
     PAGES = {xiv+463},
      ISBN = {0-387-98386-4},
   MRCLASS = {57M50 (11R52)},
  MRNUMBER = {1937957},
MRREVIEWER = {Kerry\ N.\ Jones},
       DOI = {10.1007/978-1-4757-6720-9},
       URL = {https://doi.org/10.1007/978-1-4757-6720-9},
}

@book {rosen,
    AUTHOR = {Rosen, Kenneth H.},
     TITLE = {Elementary number theory and its applications},
   EDITION = {Fourth},
 PUBLISHER = {Addison-Wesley, Reading, MA},
      YEAR = {2000},
     PAGES = {xviii+638},
      ISBN = {0-201-87073-8},
   MRCLASS = {11-01},
  MRNUMBER = {1739433},
MRREVIEWER = {Norman\ J.\ Richert},
}

@incollection {tomter,
    AUTHOR = {Tomter, Per},
     TITLE = {Anosov flows on infra-homogeneous spaces},
 BOOKTITLE = {Global {A}nalysis ({P}roc. {S}ympos. {P}ure {M}ath., {V}ols.
              {XIV}, {XV}, {XVI}, {B}erkeley, {C}alif., 1968)},
    SERIES = {Proc. Sympos. Pure Math.},
    VOLUME = {XIV-XVI},
     PAGES = {299--327},
 PUBLISHER = {Amer. Math. Soc., Providence, RI},
      YEAR = {1970},
   MRCLASS = {57.48 (34.00)},
  MRNUMBER = {279831},
MRREVIEWER = {T.\ Nagano},
}

@article {fenley-QGImpliesNonRcov,
    AUTHOR = {Fenley, S\'ergio R.},
     TITLE = {Quasigeodesic {A}nosov flows and homotopic properties of flow
              lines},
   JOURNAL = {J. Differential Geom.},
  FJOURNAL = {Journal of Differential Geometry},
    VOLUME = {41},
      YEAR = {1995},
    NUMBER = {2},
     PAGES = {479--514},
      ISSN = {0022-040X,1945-743X},
   MRCLASS = {58F15 (57M50 57R30 58F18)},
  MRNUMBER = {1331975},
MRREVIEWER = {Lee\ Mosher},
       URL = {http://projecteuclid.org/euclid.jdg/1214456224},
}

@article {fenley-NonRcovImpliesQG,
    AUTHOR = {Fenley, Sergio R.},
     TITLE = {Non {$\Bbb R$}-covered {A}nosov flows in hyperbolic
              3-manifolds are quasigeodesic},
   JOURNAL = {Geom. Funct. Anal.},
  FJOURNAL = {Geometric and Functional Analysis},
    VOLUME = {36},
      YEAR = {2026},
    NUMBER = {2},
     PAGES = {412--508},
      ISSN = {1016-443X,1420-8970},
   MRCLASS = {57R30 (37C85 37D20 37E10 53C12)},
  MRNUMBER = {5048793},
       DOI = {10.1007/s00039-026-00733-5},
       URL = {https://doi.org/10.1007/s00039-026-00733-5},
}

@article {fenley-AnosovFlowsIn3Manifolds,
    AUTHOR = {Fenley, S\'ergio R.},
     TITLE = {Anosov flows in {$3$}-manifolds},
   JOURNAL = {Ann. of Math. (2)},
  FJOURNAL = {Annals of Mathematics. Second Series},
    VOLUME = {139},
      YEAR = {1994},
    NUMBER = {1},
     PAGES = {79--115},
      ISSN = {0003-486X,1939-8980},
   MRCLASS = {58F15 (57M50 57N10 58F18)},
  MRNUMBER = {1259365},
MRREVIEWER = {Lee\ Mosher},
       DOI = {10.2307/2946628},
       URL = {https://doi.org/10.2307/2946628},
}

@article {bonatti-iakovoglou,
    AUTHOR = {Bonatti, Christian and Iakovoglou, Ioannis},
     TITLE = {Anosov flows on 3-manifolds: the surgeries and the foliations},
   JOURNAL = {Ergodic Theory Dynam. Systems},
  FJOURNAL = {Ergodic Theory and Dynamical Systems},
    VOLUME = {43},
      YEAR = {2023},
    NUMBER = {4},
     PAGES = {1129--1188},
      ISSN = {0143-3857,1469-4417},
   MRCLASS = {37D20 (37D40 57M10 57R30)},
  MRNUMBER = {4555824},
       DOI = {10.1017/etds.2021.170},
       URL = {https://doi.org/10.1017/etds.2021.170},
}

@incollection {goodman,
    AUTHOR = {Goodman, Sue},
     TITLE = {Dehn surgery on {A}nosov flows},
 BOOKTITLE = {Geometric dynamics ({R}io de {J}aneiro, 1981)},
    SERIES = {Lecture Notes in Math.},
    VOLUME = {1007},
     PAGES = {300--307},
 PUBLISHER = {Springer, Berlin},
      YEAR = {1983},
      ISBN = {3-540-12336-9},
   MRCLASS = {58F15 (57R65)},
  MRNUMBER = {1691596},
       DOI = {10.1007/BFb0061421},
       URL = {https://doi.org/10.1007/BFb0061421},
}

@book {fisher-hasselblatt,
    AUTHOR = {Fisher, Todd and Hasselblatt, Boris},
     TITLE = {Hyperbolic flows},
    SERIES = {Zurich Lectures in Advanced Mathematics},
 PUBLISHER = {EMS Publishing House, Berlin},
      YEAR = {[2019] \copyright 2019},
     PAGES = {xiv+723},
      ISBN = {978-3-03719-200-9},
   MRCLASS = {37-02 (37A30 37A35 37D20 37D40)},
  MRNUMBER = {3972204},
MRREVIEWER = {Miguel\ Paternain},
       DOI = {10.4171/200},
       URL = {https://doi.org/10.4171/200},
}

@book {bridson-haefliger,
    AUTHOR = {Bridson, Martin R. and Haefliger, Andr\'e},
     TITLE = {Metric spaces of non-positive curvature},
    SERIES = {Grundlehren der mathematischen Wissenschaften [Fundamental
              Principles of Mathematical Sciences]},
    VOLUME = {319},
 PUBLISHER = {Springer-Verlag, Berlin},
      YEAR = {1999},
     PAGES = {xxii+643},
      ISBN = {3-540-64324-9},
   MRCLASS = {53C23 (20F65 53C70 57M07)},
  MRNUMBER = {1744486},
MRREVIEWER = {Athanase\ Papadopoulos},
       DOI = {10.1007/978-3-662-12494-9},
       URL = {https://doi.org/10.1007/978-3-662-12494-9},
}

@incollection {parker,
    AUTHOR = {Parker, John R.},
     TITLE = {Traces in complex hyperbolic geometry},
 BOOKTITLE = {Geometry, topology and dynamics of character varieties},
    SERIES = {Lect. Notes Ser. Inst. Math. Sci. Natl. Univ. Singap.},
    VOLUME = {23},
     PAGES = {191--245},
 PUBLISHER = {World Sci. Publ., Hackensack, NJ},
      YEAR = {2012},
      ISBN = {978-981-4401-35-7; 981-4401-35-8},
   MRCLASS = {32Q45 (51M10 57S25)},
  MRNUMBER = {2987619},
MRREVIEWER = {Krishnendu\ Gongopadhyay},
       DOI = {10.1142/9789814401364\_0006},
       URL = {https://doi.org/10.1142/9789814401364_0006},
}

@book {bonatti-diaz-viana,
    AUTHOR = {Bonatti, Christian and D\'iaz, Lorenzo J. and Viana, Marcelo},
     TITLE = {Dynamics beyond uniform hyperbolicity},
    SERIES = {Encyclopaedia of Mathematical Sciences},
    VOLUME = {102},
      NOTE = {A global geometric and probabilistic perspective,
              Mathematical Physics, III},
 PUBLISHER = {Springer-Verlag, Berlin},
      YEAR = {2005},
     PAGES = {xviii+384},
      ISBN = {3-540-22066-6},
   MRCLASS = {37-02 (37C20 37C29 37D25 37D30)},
  MRNUMBER = {2105774},
MRREVIEWER = {Sheldon\ E.\ Newhouse},
}

@article {asaoka,
    AUTHOR = {Asaoka, Masayuki},
     TITLE = {On invariant volumes of codimension-one {A}nosov flows and the
              {V}erjovsky conjecture},
   JOURNAL = {Invent. Math.},
  FJOURNAL = {Inventiones Mathematicae},
    VOLUME = {174},
      YEAR = {2008},
    NUMBER = {2},
     PAGES = {435--462},
      ISSN = {0020-9910,1432-1297},
   MRCLASS = {37D20 (37C10)},
  MRNUMBER = {2439611},
MRREVIEWER = {Boris\ Hasselblatt},
       DOI = {10.1007/s00222-008-0151-9},
       URL = {https://doi.org/10.1007/s00222-008-0151-9},
}

@article {BBGRH,
    AUTHOR = {Barthelm\'e, Thomas and Bonatti, Christian and Gogolev, Andrey
              and Rodriguez Hertz, Federico},
     TITLE = {Anomalous {A}nosov flows revisited},
   JOURNAL = {Proc. Lond. Math. Soc. (3)},
  FJOURNAL = {Proceedings of the London Mathematical Society. Third Series},
    VOLUME = {122},
      YEAR = {2021},
    NUMBER = {1},
     PAGES = {93--117},
      ISSN = {0024-6115,1460-244X},
   MRCLASS = {37E35 (37D20 57K35)},
  MRNUMBER = {4210258},
MRREVIEWER = {Norikazu\ Hashiguchi},
       DOI = {10.1112/plms.12321},
       URL = {https://doi.org/10.1112/plms.12321},
}

@article {brunella,
    AUTHOR = {Brunella, Marco},
     TITLE = {Separating the basic sets of a nontransitive {A}nosov flow},
   JOURNAL = {Bull. London Math. Soc.},
  FJOURNAL = {The Bulletin of the London Mathematical Society},
    VOLUME = {25},
      YEAR = {1993},
    NUMBER = {5},
     PAGES = {487--490},
      ISSN = {0024-6093,1469-2120},
   MRCLASS = {58F15},
  MRNUMBER = {1233413},
MRREVIEWER = {Lawrence\ Conlon},
       DOI = {10.1112/blms/25.5.487},
       URL = {https://doi.org/10.1112/blms/25.5.487},
}

@article {handel-thurston,
    AUTHOR = {Handel, Michael and Thurston, William P.},
     TITLE = {Anosov flows on new three manifolds},
   JOURNAL = {Invent. Math.},
  FJOURNAL = {Inventiones Mathematicae},
    VOLUME = {59},
      YEAR = {1980},
    NUMBER = {2},
     PAGES = {95--103},
      ISSN = {0020-9910,1432-1297},
   MRCLASS = {58F15},
  MRNUMBER = {577356},
MRREVIEWER = {Zbigniew\ Nitecki},
       DOI = {10.1007/BF01390039},
       URL = {https://doi.org/10.1007/BF01390039},
}

@article {bonatti-langevin,
    AUTHOR = {Bonatti, Christian and Langevin, R\'emi},
     TITLE = {Un exemple de flot d'{A}nosov transitif transverse \`a{} un
              tore et non conjugu\'e{} \`a{} une suspension},
   JOURNAL = {Ergodic Theory Dynam. Systems},
  FJOURNAL = {Ergodic Theory and Dynamical Systems},
    VOLUME = {14},
      YEAR = {1994},
    NUMBER = {4},
     PAGES = {633--643},
      ISSN = {0143-3857,1469-4417},
   MRCLASS = {58F15},
  MRNUMBER = {1304136},
MRREVIEWER = {Boris\ Hasselblatt},
       DOI = {10.1017/S0143385700008099},
       URL = {https://doi.org/10.1017/S0143385700008099},
}

@incollection {franks-williams,
    AUTHOR = {Franks, John and Williams, Bob},
     TITLE = {Anomalous {A}nosov flows},
 BOOKTITLE = {Global theory of dynamical systems ({P}roc. {I}nternat.
              {C}onf., {N}orthwestern {U}niv., {E}vanston, {I}ll., 1979)},
    SERIES = {Lecture Notes in Math.},
    VOLUME = {819},
     PAGES = {158--174},
 PUBLISHER = {Springer, Berlin},
      YEAR = {1980},
      ISBN = {3-540-10236-1},
   MRCLASS = {58F15},
  MRNUMBER = {591182},
MRREVIEWER = {D.\ K.\ Arrowsmith},
}

@article {BBY,
    AUTHOR = {B\'eguin, François and Bonatti, Christian and Yu, Bin},
     TITLE = {Building {A}nosov flows on 3-manifolds},
   JOURNAL = {Geom. Topol.},
  FJOURNAL = {Geometry \& Topology},
    VOLUME = {21},
      YEAR = {2017},
    NUMBER = {3},
     PAGES = {1837--1930},
      ISSN = {1465-3060,1364-0380},
   MRCLASS = {37D20 (57M50)},
  MRNUMBER = {3650083},
MRREVIEWER = {Rafael\ Oswaldo\ Ruggiero},
       DOI = {10.2140/gt.2017.21.1837},
       URL = {https://doi.org/10.2140/gt.2017.21.1837},
}

@article {neige,
    AUTHOR = {Paulet, Neige},
     TITLE = {Anosov flows in dimension 3 from gluing building blocks with
              quasi-transverse boundary},
   JOURNAL = {J. Mod. Dyn.},
  FJOURNAL = {Journal of Modern Dynamics},
    VOLUME = {21},
      YEAR = {2025},
     PAGES = {21--240},
      ISSN = {1930-5311,1930-532X},
   MRCLASS = {37D20 (37C10 37D05 57K30)},
  MRNUMBER = {4879873},
MRREVIEWER = {Thilo\ Kuessner},
       DOI = {10.3934/jmd.2025002},
       URL = {https://doi.org/10.3934/jmd.2025002},
}

@article {bowden-mann,
    AUTHOR = {Bowden, Jonathan and Mann, Kathryn},
     TITLE = {{$C^0$} stability of boundary actions and inequivalent
              {A}nosov flows},
   JOURNAL = {Ann. Sci. \'Ec. Norm. Sup\'er. (4)},
  FJOURNAL = {Annales Scientifiques de l'\'Ecole Normale Sup\'erieure.
              Quatri\`eme S\'erie},
    VOLUME = {55},
      YEAR = {2022},
    NUMBER = {4},
     PAGES = {1003--1046},
      ISSN = {0012-9593,1873-2151},
   MRCLASS = {53C22 (37C15 37C85 37D20)},
  MRNUMBER = {4468857},
MRREVIEWER = {Boris\ Hasselblatt},
       DOI = {10.24033/asens.2512},
       URL = {https://doi.org/10.24033/asens.2512},
}

@article {barbot-maquera,
    AUTHOR = {Barbot, Thierry and Maquera, Carlos},
     TITLE = {Algebraic {A}nosov actions of nilpotent {L}ie groups},
   JOURNAL = {Topology Appl.},
  FJOURNAL = {Topology and its Applications},
    VOLUME = {160},
      YEAR = {2013},
    NUMBER = {1},
     PAGES = {199--219},
      ISSN = {0166-8641,1879-3207},
   MRCLASS = {37D20 (22E25)},
  MRNUMBER = {2995092},
MRREVIEWER = {Jinpeng\ An},
       DOI = {10.1016/j.topol.2012.10.012},
       URL = {https://doi.org/10.1016/j.topol.2012.10.012},
}

@article {agol,
    AUTHOR = {Agol, Ian},
     TITLE = {The virtual {H}aken conjecture},
      NOTE = {With an appendix by Agol, Daniel Groves, and Jason Manning},
   JOURNAL = {Doc. Math.},
  FJOURNAL = {Documenta Mathematica},
    VOLUME = {18},
      YEAR = {2013},
     PAGES = {1045--1087},
      ISSN = {1431-0635,1431-0643},
   MRCLASS = {20F67 (57Mxx)},
  MRNUMBER = {3104553},
MRREVIEWER = {Thomas\ Koberda},
       URL = {https://elibm.org/article/10000267},
}

@misc{fenley-mann-potrie,
      title={Exotic codimension one Anosov flows}, 
      author={Sergio Fenley and Kathryn Mann and Rafael Potrie},
      year={2026},
      note={arXiv:2605.25082},
      archivePrefix={arXiv},
      primaryClass={math.DS},
      url={https://arxiv.org/abs/2605.25082}, 
}

@misc{beguin-yu,
      title={Existence of arbitrary large numbers of non-$\mathbb R$-covered Anosov flows on hyperbolic $3$-manifolds}, 
      author={François Béguin and Bin Yu},
      year={2024},
      note={arXiv:2402.06551},
      archivePrefix={arXiv},
      primaryClass={math.DS},
      url={https://arxiv.org/abs/2402.06551}, 
}

@misc{BBMY,
      title={Construction of Anosov flows on fibered hyperbolic 3-manifolds}, 
      author={François Béguin and Christian Bonatti and Biao Ma and Bin Yu},
      year={2026},
      note={arXiv:2603.06105},
      archivePrefix={arXiv},
      primaryClass={math.DS},
      url={https://arxiv.org/abs/2603.06105}, 
}

@misc{PZ26,
      title={An obstruction to fiberwise Anosov flows over 3-dimensional Anosov flows}, 
      author={Neige Paulet and Danyu Zhang},
      year={2026},
      note={arXiv:2601.18487},
      archivePrefix={arXiv},
      primaryClass={math.DS},
      url={https://arxiv.org/abs/2601.18487}, 
}

@misc{filip-fisher-lowe,
      title={Finiteness of totally geodesic hypersurfaces}, 
      author={Simion Filip and David Fisher and Ben Lowe},
      year={2025},
      note={arXiv:2408.03430},
      archivePrefix={arXiv},
      primaryClass={math.DG},
      url={https://arxiv.org/abs/2408.03430}, 
}

\end{document}